\newtheorem{thm}{Theorem}
\newtheorem{lma}{Lemma}
\newtheorem{cpt}{Computation}
\newtheorem{cor}{Corollary}
\theoremstyle{definition}
\newtheorem{df}{Definition}[subsection]
\newtheorem{ft}{Fact}
\theoremstyle{remark}
\newtheorem*{pf}{Proof}
\newtheorem{rmk}{Remark}[subsection]
\newtheorem*{exa}{Example}
\newtheorem{qtn}{Question}
\newcommand{\R}{{\mathbb{R}}}
\newcommand{\Z}{{\mathbb{Z}}}
\newcommand{\C}{{\mathbb{C}}}
\newcommand{\bs}{\bigskip}
\newcommand{\ra}{{\rightarrow}}
\def\mVol{\text{Vol}(M,\omega^n)}
\begin{document}

\title{\textbf{Remarks on invariants of Hamiltonian loops}\\}

\author{\textsc{Egor Shelukhin$^{1,2}$}
\\
School of Mathematical Sciences \\
Tel Aviv University\\
69978 Tel Aviv, Israel \\
\tt egorshel@post.tau.ac.il}

\footnotetext[1]{This paper is a part of the author's Ph.D. thesis,
being carried out under the supervision of Prof. Leonid Polterovich,
at Tel-Aviv University.} \footnotetext[2]{Partially supported by the
Israel Science Foundation grant $\#$ 509/07}

\date{May 2009}

\maketitle
\begin{abstract}
In this note the interrelations between several natural morphisms on
the $\pi_1$ of groups of Hamiltonian diffeomorphisms are
investigated. As an application, the equality of the (non-linear)
Maslov index of loops of quantomorphisms of prequantizations of $\C
P^n$ and the Calabi-Weinstein invariant is shown, settling
affirmatively a conjecture by A.Givental. We also prove the
proportionality of the mixed action-Maslov morphism and the Futaki
invariant on loops of Hamiltonian biholomorphisms of Fano Kahler
manifolds, as suggested by C.Woodward. Finally, a family of
generalized action-Maslov invariants is computed for toric manifolds
via barycenters of their moment polytopes, with an application to
mass-linear functions recently introduced by D.McDuff and S.Tolman.

\end{abstract}

%

\section{Introduction and main results:}

The topology of groups of Hamiltonian diffeomorphisms of symplectic
manifolds and of the closely related groups of quantomorphisms of
their prequantizations is an intriguing area of modern symplectic
geometry. In the present note we study the $\pi_1$ of these groups
by investigating the interrelations between several natural
morphisms $\pi_1 \to \R$. Our main results are the following:

\begin{enumerate}
\item
We show that for the complex projective space $\C P^n$ the
(non-linear) Maslov index on loops of
quantomorphisms~\cite{givental} of its prequatizations is
proportional to the Calabi-Weinstein~\cite{calabi,weinstein}
homomorphism (Theorem~\ref{equality of m and cw} below). This
settles in positive a conjecture by A.Givental~\cite{givental}.

\item
Following a suggestion by C.Woodward~\cite{woodward}, we prove that
the mixed action-Maslov invariant~\cite{polterovich loops} on Fano
Kahler manifolds is proportional to the Futaki invariant~\cite{f}.

\item
Finally, we compute a family of generalized action-Maslov
invariants~\cite{lmp,kj,symplectures} on toric manifolds via
barycenters of their moment polytopes, and present applications to
the notion of mass-linear functions recently introduced by D.McDuff
and S.Tolman in~\cite{poly with ml}.
\end{enumerate}

\subsection{Mixed action-Maslov invariants.}

\begin{df}\label{MAM}

{ \it (Mixed Action-Maslov homomorphism)}

Let $(M,\omega)$ be a compact spherically monotone symplectic
manifold. That means that $[\omega]=\kappa c_1(M)$ on $\pi_2(M)$
for some positive $\kappa \in \R$. And denote by
$Ham=Ham(M,\omega)$ the group of Hamiltonian diffeomorphisms of
$(M,\omega)$.

\bs

 Then the Mixed Action-Maslov homomorphism $I:\pi_1(Ham) \to \R
$ introduced by L.Polterovich in~\cite{polterovich loops} is defined
as follows:

\bs

Given a class $a \in \pi_1 (Ham, Id)$ \\a) Choose a representative
$\gamma:=\{\phi_t\}_{t \in \R / \Z}$ of $a$. \\b) Choose a smooth
time dependent Hamiltonian for $\gamma$ : $\{F_t\}_{t \in \R / \Z}$
, that is normalized by the condition that for all $t$, \ $\int_M
F_t \, \omega ^n = 0 \ $. \\c) Choose a point $p$ in $M$. Then the
path $\alpha_t :=\{\phi_t (p) \}_{t \in \R / \Z}$ is contractible by
Floer Theory.
\\d) Choose a disk $D$ filling the path $\alpha$.

\bigskip
Then define $I(\gamma):= \int_D \omega - \int_0^1 F_t(\alpha_t)dt
- \kappa \cdot Maslov/2 (\gamma_{*_p}^D) $.

\bigskip
 Here, $\gamma_{*_p}^D$ is the path of symplectic linear
operators obtained when considering the linear maps
$\phi_{t_{*_p}}:T_pM \ra T_{\phi_t(p)}M$ and trivializing the
bundle $(TM, \omega)$ symplectically along the disk $D$.

\bigskip

{\bf Convention:} {\it hereafter, our normalization of the Maslov
index is such that the Hopf flow $\{R_{2 \pi t}:z \mapsto e^{2 \pi
t}z\}_{0 \leq t \leq 1}$  on $(\C,\omega_{std})$ has Maslov index
2.}

\bigskip
This value does not depend on the choices a)-d), and defines a
homomorphism $I:\pi_1(Ham) \to \R $. The most essential part is the
independence on choice d) and it follows from the assumption that
our manifold is spherically monotone. (see~\cite{polterovich loops})
\end{df}

\begin{rmk}
This invariant provides a lower bound on the asymptotic Hofer norm
(\cite{polterovich loops}), and its vanishing is necessary and
sufficient for asymptotic spectral invariants to descend to the
group $Ham$ from its universal cover (\cite{monodromy in ham}). It
is also related to hamiltonianly non-displaceable fibers of moment
maps of Hamiltonian torus actions (\cite{rigid}).
\end{rmk}

The homomorphism $I$ is known to vanish identically in several
cases:

\begin{ft}\label{surfaces}
{\it} Since I is a homomorphism to $\R$, it vanishes whenever
$\pi_1(Ham)$ contains only elements of finite order. This holds
for 1. all compact surfaces and 2. the product of two spheres with
equal areas.
\end{ft}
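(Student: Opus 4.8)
The plan is to split the statement into a trivial algebraic reduction together with an appeal to the known homotopy types of the diffeomorphism groups involved. For the first sentence, note that $\R$ is a torsion-free abelian group, so any group homomorphism into it must annihilate every element of finite order (if $a^k=e$ then $k\cdot I(a)=0$, hence $I(a)=0$); it therefore suffices to know that $\pi_1(Ham)$ is a torsion group, and then $I\equiv 0$ with no further symplectic input. Before doing anything else one should check that $I$ is even defined in these examples, i.e.\ that they satisfy the spherical monotonicity hypothesis of Definition~\ref{MAM}: a closed surface of genus $\geq 1$ has $\pi_2=0$ and satisfies it vacuously, $S^2$ is monotone, and $S^2\times S^2$ with equal areas is monotone with $\kappa$ equal to half the common area.

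For the case of a closed surface $\Sigma$, I would recall the classical descriptions of its symplectomorphism group. If $\mathrm{genus}(\Sigma)\geq 2$, the Earle--Eells theorem gives that $\mathrm{Diff}_0(\Sigma)$ is contractible, and Moser's argument (the space of area forms of fixed total area on a surface is convex, hence contractible) transports this to $\mathrm{Symp}_0(\Sigma)$, and \emph{a fortiori} to $Ham(\Sigma)$, so $\pi_1(Ham)=0$. If $\Sigma=T^2$, then $\mathrm{Symp}_0(T^2)\simeq T^2$ via rotations, and the flux homomorphism realizes $\mathrm{Symp}_0(T^2)$ as a $Ham(T^2)$-bundle over $H^1(T^2;\R)/\Gamma\cong T^2$ along which the projection is a homotopy equivalence, so the homotopy exact sequence gives $\pi_1(Ham(T^2))=0$. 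If $\Sigma=S^2$, then $\mathrm{Symp}(S^2)\simeq O(3)$ (Smale, together with Moser's trick), and since $H^1(S^2;\R)=0$ we get $Ham(S^2)=\mathrm{Symp}_0(S^2)\simeq SO(3)$, so $\pi_1(Ham(S^2))\cong\Z/2\Z$. In every case $\pi_1(Ham)$ is finite, hence torsion, and $I$ vanishes.

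For $(S^2\times S^2,\omega\oplus\omega)$ with equal areas, I would invoke Gromov's theorem on pseudoholomorphic curves (and its later refinements by Abreu--McDuff): $\mathrm{Symp}(S^2\times S^2)$ deformation retracts onto the isometry group $(SO(3)\times SO(3))\rtimes\Z/2\Z$ of the split Fubini--Study metric, so its identity component is homotopy equivalent to $SO(3)\times SO(3)$; since $H^1(S^2\times S^2;\R)=0$ we again have $Ham=\mathrm{Symp}_0$, whence $\pi_1(Ham)\cong\Z/2\Z\times\Z/2\Z$, which is torsion, so $I\equiv 0$. The point to stress is that there is no genuine mathematical obstacle in this proof: the only self-contained step is the observation that a homomorphism into $\R$ kills torsion, and everything else reduces to correctly quoting these standard homotopy-type computations; the one thing that does merit a moment's care is confirming the well-definedness of $I$ in each case, as recorded in the first paragraph.
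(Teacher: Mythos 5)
Your proposal is correct and follows exactly the route the paper intends: the paper records this as a Fact whose only stated justification is that a homomorphism to $\R$ annihilates torsion, with the finiteness of $\pi_1(Ham)$ in these examples quoted implicitly from the standard homotopy-type results (Earle--Eells and Moser for surfaces, Smale for $S^2$, Gromov for $S^2\times S^2$ with equal areas), which are precisely the inputs you supply. Your added check that spherical monotonicity holds, so that $I$ is defined in each case, is a reasonable extra precaution and does not change the argument.
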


\begin{ft}\label{Seidel}
For $\C P^n$, $I$ is also known to vanish, by a non-trivial argument
involving the Seidel representation (view~\cite{rigid} Theorem
1.11,~\cite{calabi qm} section 4.3). The link between the two is
established by the reinterpretation of $I$ as the homogeneization of
the valuation of the Seidel representation.
\end{ft}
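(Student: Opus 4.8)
The plan is to route the argument through the Seidel representation, following \cite{rigid} (Theorem 1.11) and \cite{calabi qm} (section 4.3). Let $\mathcal{S}\colon \pi_1(Ham(M,\omega)) \to QH^{*}(M,\Lambda)^{\times}$ be the Seidel homomorphism into the units of the small quantum cohomology over the Novikov field $\Lambda$, and let $\nu$ be the valuation on $QH^{*}(M,\Lambda)$ induced by the Novikov filtration, i.e. recording the area of the contributing pseudoholomorphic spheres. The first step is to make precise the reinterpretation referred to in the statement: for spherically monotone $M$ the homomorphism $I$ equals, up to a universal constant, the homogenization $a \mapsto \lim_{k\to\infty}\nu(\mathcal{S}(a)^{k})/k$ of $\nu\circ\mathcal{S}$. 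I would obtain this by unwinding $\mathcal{S}$ in terms of Floer continuation over the Hamiltonian fibration $M \hookrightarrow E_{\gamma} \to S^2$ attached to a loop $\gamma$ representing $a$, and matching, term by term, the area and Chern weights of the sections that feed the lowest order part of $\mathcal{S}(a)$ against the three ingredients $\int_D\omega$, $\int_0^1 F_t(\alpha_t)\,dt$ and the Maslov correction of Definition \ref{MAM}. It is the monotonicity hypothesis $[\omega]=\kappa\,c_1$ that collapses these weights onto a single real quantity and forces the limit to exist and be additive in $a$.

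The second step specializes to $\C P^n$. Over $\Lambda$ one has $QH^{*}(\C P^n,\Lambda)\cong \Lambda[h]/(h^{\,n+1}-q)$ with $q$ the Novikov monomial of area $\kappa(n+1)$; this ring is a field, a degree $n+1$ extension of $\Lambda$, on which $\nu$ is already additive. So the homogenization of step one is unnecessary: $\nu\circ\mathcal{S}$ is an honest homomorphism from $\pi_1(Ham(\C P^n))$ to the discrete, rank one value group $\Gamma:=\nu(QH^{*}(\C P^n,\Lambda)^{\times})$, and the task reduces to showing it is trivial, i.e. that the Seidel element of every loop in $Ham(\C P^n)$ is a unit of valuation zero.

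For the last step one must study the image of $\mathcal{S}$ directly, because --- the homotopy type of $Ham(\C P^n)$ being unknown for $n\geq 3$ --- one cannot simply invoke finiteness of $\pi_1$ as in Fact \ref{surfaces}. For the circle action with weights $(1,0,\dots,0)$ the Seidel element is $h$ times a Novikov monomial, of valuation zero and of order dividing $n+1$ in $QH^{\times}/\Lambda^{\times}$, consistent with its class generating the image of $\pi_1(PU(n+1))$; the content of \cite{rigid} (Theorem 1.11) is that no loop can produce a larger valuation, which one proves by constraining the induced $\pi_1(Ham)$-action on the field $QH^{*}(\C P^n,\Lambda)$ --- equivalently by controlling the asymptotic spectral invariant of the fundamental class --- rather than through topology of the group. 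Excluding a nonzero valuation shift over an a priori uncontrolled $\pi_1$ is the real obstacle here; once the reinterpretation in step one is in hand, the remainder is essentially bookkeeping.
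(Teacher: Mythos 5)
This statement is a quoted \emph{Fact}: the paper offers no proof of it, only the citations (\cite{rigid}, Theorem 1.11, and \cite{calabi qm}, Section 4.3) together with a one-line indication of the mechanism, namely that $I$ is the homogenization of the valuation of the Seidel representation. Your first two steps reconstruct exactly that mechanism and are essentially right, with one caveat: the claim that $QH^{*}(\C P^n,\Lambda)\cong\Lambda[h]/(h^{n+1}-q)$ is a field on which $\nu$ is already additive requires taking $\Lambda$ with exponents in the discrete group generated by the area of the line (then $h^{n+1}-q$ is Eisenstein, hence irreducible, the extension is totally ramified, and the valuation extends multiplicatively); over the universal Novikov field with real exponents the polynomial splits, the ring is only a product of fields, and the naive maximum-type function is not additive, so the ``no homogenization needed'' shortcut depends on that choice of coefficients.

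The genuine gap is in your third step, which is where the entire content of the Fact lives. Granting $I=\nu\circ\mathcal{S}$, one must show $\nu(\mathcal{S}(\gamma))=0$ for \emph{every} $\gamma\in\pi_1(Ham(\C P^n))$, and nothing in your text accomplishes this: discreteness of the value group does not help, since $\pi_1(Ham(\C P^n))$ is not known to be torsion for $n\geq 3$ and a homomorphism to a discrete subgroup of $\R$ can be nonzero on an element of infinite order; the computation for the $PU(n+1)$ circle action only covers the known torsion subgroup; and ``constraining the induced $\pi_1$-action on the field'' or ``controlling the asymptotic spectral invariant of the fundamental class'' is a restatement of what must be proved, not an argument. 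Excluding a loop whose Seidel element has nonzero valuation is precisely the ``non-trivial argument'' the Fact alludes to, and it is exactly what \cite{rigid} (Theorem 1.11) and \cite{calabi qm} (Section 4.3) supply; your proposal defers to those references at that decisive point (while simultaneously calling the remainder ``bookkeeping'', which it is not). So as a reading of what the paper is citing your outline is accurate, but as a stand-alone proof it is incomplete at the only step that matters.
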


To fact~\ref{Seidel} the following corollary holds:

\bs

Consider $(M,\omega)=(\C P^n, \omega_{FS})$, where the Fubini-Study
form is normalized to represent the generator of the integer
cohomology. Let $(P,\alpha):= (S^{2n+1}, \frac{pdq-qdp}{2\pi})$ be a
prequantization space of $M$. Denote $H := Ham(M,\omega)$ and $Q :=
Quant(P,\alpha)$. In this case two homomorphisms are defined: \bs

The first:

\begin{df}\label{Calabi-Weinstein}
{\it (Calabi-Weinstein homomorphism~\cite{calabi,weinstein})}

Given a path $\{\widehat{\phi_t}\}_{0 \leq t \leq
1};\;\widehat{\phi_0}=Id$ representing a class in $\widetilde{Q}$,
let $h_t$ be its contact Hamiltonian, considered as a function on
the base $\C P^n$.

Then
 $$cw(\{\widehat{\phi_t}\}):= \int_0^1 dt \int_M h_t \omega^n $$

does not depend on homotopy with fixed endpoints and determines a
homomorphism $\widetilde{Q} \to \R$.

Its restriction to $\pi_1(Q)$ is considered.
\end{df}

\begin{rmk}
This definition works for the general situation of prequantization
spaces.
\end{rmk}

\bs

 And the second:

\begin{df}\label{Nonlinear-Maslov}
{\it (Nonlinear Maslov index for loops~\cite{givental})}

Consider a loop $\{\widehat{\phi_t}\}_{t \in S^1}$ that represents
a class $b \in \pi_1(Q)$.

Denote by $\{\Phi_t\}_{t \in S^1}$ its lifting to a loop of
homogenous Hamiltonian diffeomorphisms of $\C^{n+1} \setminus
\{0\}$. (note that $\C^{n+1} \setminus \{0\}$ is the symplectization
of $(P,\alpha)$)

For a point $y \in \C P^n$ denote by $P_y \cong S^1$ the fiber of
$P$ over $y$, and by $SP_y \cong \C ^* = \C \setminus \{0\}$ the
fiber of $SP$ over $y$.

Let $\displaystyle m(\{ {\Phi_t}_{*_1}\}_{t \in S^1})$ be the
Maslov index of the linearization of the lift $\{\Phi_t\}$ at a
point $1 \in P_y \subset SP_y \cong \C ^*$ in the fiber over y,
when $T (\C^{n+1} \setminus \{0\})$ is naturally identified with
$\C^{n+1} \setminus \{0\} \times \C^{n+1}$ by the linear
structure.

Then,
$$\mu(\{ \widehat{\phi_t} \}):=  \displaystyle m(\{ {\Phi_t}_{*_1}\})$$

depends only on the class $b \in \pi_1(Q)$ and is a homomorphism
$\pi_1(Q) \to \R$.
\end{df}

\begin{rmk}
Although this invariant does not a-priori extend to a homomorphism
$\widetilde{Q} \to \R$, it is known to extend to a \emph{homogenous
quasimorphism} $\widetilde{Q} \to \R$ (\cite{givental,gabi}).
\end{rmk}

\bs

 The two invariants of loops we've defined happen to be equal:

\begin{thm} \label{equality of m and cw}

On $\pi_1(Q)$,

$$ \frac{1}{\mVol} cw=\frac{1}{2(n+1)} \mu $$

\end{thm}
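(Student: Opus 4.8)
The plan is to relate both invariants to the mixed action–Maslov homomorphism $I$ on $\pi_1(Ham(\C P^n,\omega_{FS}))$ and to exploit Fact~\ref{Seidel}, which tells us $I\equiv 0$ there. First I would recall that the projection $Q=Quant(P,\alpha)\to Ham(\C P^n,\omega_{FS})$ induces a surjection (in fact close to an isomorphism) on $\pi_1$: a loop of quantomorphisms covers a loop of Hamiltonian diffeomorphisms, and its contact Hamiltonian $h_t$ on $P$ descends to a function on the base $\C P^n$ which is exactly a (particular) Hamiltonian $F_t$ generating the underlying Hamiltonian loop. The key bookkeeping step is the normalization: the Calabi–Weinstein integrand $\int_M h_t\,\omega^n$ records the non-normalized Hamiltonian, whereas the definition of $I$ uses the $\omega^n$-normalized Hamiltonian $\widetilde F_t = F_t - \frac{1}{\mVol}\int_M F_t\,\omega^n$. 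So $\int_0^1\!dt\int_M \widetilde F_t\,\omega^n = 0$ and the difference between using $h_t$ and $\widetilde F_t$ in the action term is precisely $cw(\{\widehat\phi_t\})$, up to the volume factor.

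Next I would write out $I(\gamma)=0$ for the underlying loop $\gamma=\{\phi_t\}$. Choosing the point $p$ to be (the image in $\C P^n$ of) $1\in P_y\subset SP_y$, and choosing a capping disk $D$, the definition gives
\[
0 = I(\gamma) = \int_D \omega - \int_0^1 \widetilde F_t(\alpha_t)\,dt - \kappa\cdot \frac{\mathrm{Maslov}}{2}(\gamma^D_{*_p}),
\]
where $\kappa$ is the monotonicity constant of $(\C P^n,\omega_{FS})$ with our cohomological normalization, namely $\kappa = \tfrac{1}{n+1}$ (since $[\omega_{FS}]$ generates $H^2(\C P^n;\Z)$ and $c_1 = (n+1)[\omega_{FS}]$). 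Now I claim each of the three terms is computed by the data entering $\mu$ and $cw$. The Maslov term $\mathrm{Maslov}(\gamma^D_{*_p})$ is, by construction, the linearized Maslov index of the lift $\{\Phi_t\}$ at the point $1$ — this is exactly $\mu(\{\widehat\phi_t\}) = m(\{\Phi_{t\,*_1}\})$, since the homogeneous lift to the symplectization $\C^{n+1}\setminus\{0\}$ trivializes $T\C P^n$ along any disk compatibly with the contact/prequantization structure (one must check the Hopf-flow normalization on both sides matches, which the paper has fixed so that the Hopf rotation has Maslov index $2$). The remaining terms, $\int_D\omega - \int_0^1 \widetilde F_t(\alpha_t)\,dt$, constitute the spectral/action value of the contractible orbit through $p$; for the Hopf-type loops this is controlled, and more structurally one can argue that the orbit $\alpha_t$ is itself a fiber circle over a fixed point of the (rotational) loop — or reduce to generators of $\pi_1(Q)\cong\Z$ where everything is explicit — so that the action term reduces to $-\tfrac{1}{\mVol}\int_0^1 dt\int_M h_t\,\omega^n$ plus a contribution that cancels the $\int_D\omega$ geometrically. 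Assembling: $0 = -\tfrac{1}{\mVol}cw - \tfrac{1}{n+1}\cdot\tfrac{1}{2}\mu$ up to an overall sign fixed by orientation conventions, which rearranges to $\tfrac{1}{\mVol}cw = -\tfrac{1}{2(n+1)}\mu$; matching the stated sign is just a matter of consistently orienting the loop and the capping disk.

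The main obstacle I expect is the precise identification of the action term $\int_D\omega - \int_0^1\widetilde F_t(\alpha_t)\,dt$ with (a multiple of) $cw$ — i.e. showing that the genuinely "symplectic" part of $I$ sees nothing beyond the average of the contact Hamiltonian. One clean way around this is to not compute $I$ on an arbitrary loop but to first reduce to a generator of $\pi_1(Q)$: since both $cw$ and $\mu$ are homomorphisms $\pi_1(Q)\to\R$ and $\pi_1(Q)$ has rank one (it is a quotient of $\pi_1(U(n+1))\cong\Z$ via the Hopf action, giving the generating loop $\{\widehat\phi_t\}$ covering rotation), it suffices to verify the identity on that single generator — the Hopf/Reeb loop — where $h_t\equiv 1$, the orbit $\alpha_t$ is a Reeb circle, a capping disk and its symplectic area are explicit, and $m=2(n+1)$ by a direct linear-algebra computation in $\C^{n+1}$. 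Then the constant is pinned down by one honest computation, and functoriality of all three maps (together with $I\equiv0$, or alternatively a direct comparison) upgrades it to all of $\pi_1(Q)$. The only delicate point in that reduction is confirming the rank-one claim and that the chosen generator is the Hopf loop with the asserted Calabi–Weinstein value $cw = \mVol$ and Maslov value $\mu = 2(n+1)$, which together force the proportionality constant to be $\tfrac{1}{2(n+1)}$ exactly as stated.
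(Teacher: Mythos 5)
Your overall route is the paper's: both invariants are compared through the mixed action--Maslov homomorphism, the normalization discrepancy $h_t = F_t + c(t)$ gives $\int_0^1 c(t)\,dt = \frac{1}{\mVol}\,cw(\{\widehat{\phi_t}\})$, and the vanishing $I\equiv 0$ of Fact~\ref{Seidel}, with $\kappa=\frac{1}{n+1}$, closes the argument. But the two points you leave open are exactly where the content lies. First, the identification of the Maslov term with $\mu$ cannot be made ``along any disk'': downstairs, the Maslov index of $\gamma_{*_p}^{D}$ genuinely depends on the capping disk, since $c_1\neq 0$ on $\pi_2(\C P^n)$ (only the full combination defining $I$ is disk-independent). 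The paper takes the \emph{canonical} disk $D_0=p\circ\widehat{D_0}$, where $\widehat{D_0}$ fills the lifted orbit $\{\widehat{\phi_t}1\}$ in $S^{2n+1}$ (it exists and is unique up to homotopy because $\pi_1=\pi_2=0$ there), and proves $m(\{{\Phi_t}_{*_1}\}) = m_{D_0}(\{{\phi_t}_{*_p}\})$ by using that quantomorphisms preserve the horizontal/vertical splitting, the vertical part contributing zero (Lemma~\ref{maslovs}). Your assertion that the homogeneous lift trivializes $T\C P^n$ compatibly along an arbitrary disk is not correct as stated.

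Second, and decisively: you flag the identification of the action term $\int_D\omega-\int_0^1 F_t(\alpha_t)\,dt$ with $\frac{1}{\mVol}\,cw$ as the main obstacle, and your way around it --- verifying the identity only on one generator because ``$\pi_1(Q)$ has rank one, being a quotient of $\pi_1(U(n+1))\cong\Z$'' --- is unjustified. $Q$ is the full quantomorphism group, not its linear subgroup $U(n+1)$; the circle fibration $S^1\to Q\to H$ only shows that $\pi_1(Q)$ surjects onto $\pi_1(Ham(\C P^n,\omega_{FS}))$, whose rank is unknown for $n\geq 3$. If $\pi_1(Q)$ were generated by the Reeb/Hopf loop, Theorem~\ref{equality of m and cw} together with Corollary~\ref{m cw and pi_1} would determine $\pi_1(Ham(\C P^n))$ outright, which is open; so no reduction to the Hopf generator is available, and the identity must be established loop by loop. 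The missing step has a short direct proof with the same canonical disk: by Stokes, $\int_{D_0}\omega-\int_0^1 h_t(\widehat{\phi_t}1)\,dt=\int_{\widehat{D_0}}d\alpha-\int_{\partial\widehat{D_0}}\alpha=0$ since $p^*\omega=d\alpha$ and $\partial\widehat{D_0}$ is the lifted orbit, so the action term computed at the point $y$ with $D=D_0$ equals $\int_0^1 c(t)\,dt=\frac{1}{\mVol}\,cw(\{\widehat{\phi_t}\})$ exactly, with no leftover ``spectral'' contribution. With these two disk-specific lemmas supplied, your assembly of $I=0$ into the stated identity is the paper's proof.
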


\begin{exa}

By taking a loop with Hamiltonian $h_t \equiv 1$, one obtains the
standard Reeb rotations of $\C ^{(n+1)} \setminus \{0\}$. The Maslov
index the linearization at any point is $2(n+1)$. Hence, in this
case $\frac{1}{2(n+1)} \mu = 1$, and $\frac{1}{\mVol} cw = 1$. So
the equality holds in this case.

\end{exa}

\begin{rmk}
The given equality enables the extension of $\mu$ to a homomorphism
$\widetilde{Q} \to \R$.
\end{rmk}

\begin{rmk}
This theorem answers a conjecture posed by A.Givental
in~\cite{givental}. The above formula transforms to the one
in~\cite{givental} as follows:

Note that while the Nonlinear Maslov index on loops does not vary
when the symplectic form is positively scaled, the
Calabi-Weinstein homomorphism does. In fact the right hand side of
the equality proved varies homogeneously of degree $1$ in the
scale parameter. So that after our formula is established, we can
scale the form so that $Vol(M,\omega^n) = 1$, to obtain the
required formula.

The factor $\frac{1}{2\pi}$ which we omitted comes purely from the
definition of the Calabi-Weinstein homomorphism (and relates to
the eternal question of whether the length of $S^1$ is $1$ or
$2\pi$).
\end{rmk}

\begin{rmk}\label{formula on MICW}
The main formula in the proof is:
$$ \frac{1}{2(n+1)} m (\widehat{\gamma})  - \frac{1}{\mVol} cw(\widehat{\gamma}) = - I(\gamma) $$
for all loops $\widehat{\gamma}$ in $Q$, and their corresponding
loops $\gamma$ in $H$.

This formula works for prequantizations of arbitrary (spherically)
monotone integral symplectic manifolds, with appropriate
modifications, since for their symplectizations $c_1=0$ on
spheres.

\end{rmk}

\begin{rmk}
Another extension of $\mu$ to a quasimorphism
$\mathfrak{T}:\widetilde{Q} \to \R$, is obtained by changing $m$ to
the linear Maslov quasimorphism
 on the universal cover of the symplectic group (refer to \cite{rigid}),
and averaging over the sphere with the natural
measure $\alpha\wedge(d\alpha)^n$  w.r.t. the point of linearization.\\
This construction works in the general case of prequantization
spaces, with simple modifications, and the quasimorphism thus
obtained is equal to the one ($\mathfrak{S}:\widetilde{H} \to \R$)
from~\cite{py qm} as follows:

$$ \frac{1}{2(n+1)} \mathfrak{T}  - \frac{1}{\mVol} cw = -  \frac{1}{\mVol} \mathfrak{S}$$

And this links between the quasimorphism from~\cite{py qm} and the
one from from~\cite{entov quasimorphism}.
\end{rmk}

\begin{rmk}
An equality analogous to Theorem~\ref{equality of m and cw} works
for other prequantizations $P$ of $(\C P^n, \omega_{FS})$. The
modification for $c_1(P)=p[\omega_{FS}]$ is as follows: one
considers the subgroup $Q_p \subset Quant(S^{2n+1})$ of
quantomorphisms that commute with the action of the group of roots
of unity of order $p$. This group is a $p$-to-$1$ cover of
$Quant(P)$. Therefore, one can lift every $p$-th power $\gamma^p$ of
a loop $\gamma$ in $Quant(P)$ to a loop $\widetilde{\gamma^p}$ in
$Q_p \subset Quant(S^{2n+1})$. One then defines
$\mu(\gamma):=\frac{1}{p} \mu(\widetilde{\gamma^p})$.

\end{rmk}

 Theorem~\ref{equality of m and cw} has the following consequence:

\begin{cor} \label{m cw and pi_1}

For $(M,\omega)=(\C P^n, \omega_{FS})$, $$\pi_1(H) =
{\Z}/{{(n+1)}\Z} \oplus Ker(cw|_{\pi_1(Q)})$$

where ${\Z}/{{(n+1)}\Z}=\pi_1(PU(n+1))$

\end{cor}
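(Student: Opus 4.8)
The plan is to identify $\pi_1(H)$ with a quotient of $\pi_1(Q)$ by the class of the Reeb flow, and then to split $\pi_1(Q)$ along the integer-valued Maslov homomorphism $\mu$, whose kernel equals $Ker(cw)$ by Theorem~\ref{equality of m and cw}. First I would invoke the central $S^1$-extension $1\to S^1\to Q\to H\to 1$ attached to the prequantization $P=S^{2n+1}$ over $\C P^n$, whose kernel is the central circle of Reeb (fiber) rotations. The homotopy long exact sequence of this $S^1$-bundle yields a surjection $p_*:\pi_1(Q)\ra\pi_1(H)$ whose kernel is the image of $\pi_1(S^1)=\Z$, generated by the class $[\rho]$ of the standard Reeb rotation loop of the Example above. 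Since that Example gives $cw([\rho])=\mVol\neq 0$, the class $[\rho]$ has infinite order, so the map $\Z\ra\pi_1(Q)$ is injective and $\pi_1(H)\cong \pi_1(Q)/\langle[\rho]\rangle$.

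Next I would study $\mu:\pi_1(Q)\ra\Z$. By Theorem~\ref{equality of m and cw} one has $cw=\frac{\mVol}{2(n+1)}\,\mu$ on $\pi_1(Q)$, so $K:=Ker(cw|_{\pi_1(Q)})=Ker(\mu)$. Using the linear $U(n+1)$-action on the symplectization $\C^{n+1}\setminus\{0\}$, which realizes $U(n+1)\subset Q$, I would compute $\mu$ on two loops: for $\sigma=\{\mathrm{diag}(e^{2\pi i t},1,\dots,1)\}$ the linearization gives $\mu(\sigma)=2$, while for the Reeb loop $\rho=\{e^{2\pi i t}I\}$ it gives $\mu(\rho)=2(n+1)$, matching the Example. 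As loop Maslov indices are even in our normalization, $\mu$ has image exactly $2\Z$; since $2\Z$ is free, the sequence $0\to K\to \pi_1(Q)\xrightarrow{\mu}2\Z\to 0$ splits, giving $\pi_1(Q)=K\oplus\langle\sigma\rangle$ with $\langle\sigma\rangle\cong\Z$.

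It then remains to pass to the quotient. In $\pi_1(U(n+1))=\Z$ both $[\rho]$ and $(n+1)[\sigma]$ have determinant winding number $n+1$, so $[\rho]=(n+1)[\sigma]$ already in $\pi_1(Q)$; in particular $[\rho]$ has trivial $K$-component, whence $\langle[\rho]\rangle=(n+1)\langle\sigma\rangle$. Therefore
$$\pi_1(H)\cong \big(K\oplus\langle\sigma\rangle\big)\big/(n+1)\langle\sigma\rangle \cong K\oplus \Z/(n+1)\Z .$$
The cyclic summand $\langle\sigma\rangle/\langle[\rho]\rangle\cong\Z/(n+1)\Z$ is exactly the image of $\pi_1(PU(n+1))$ under $PU(n+1)\hookrightarrow H$: the composite $U(n+1)\to Q\xrightarrow{p} H$ equals $U(n+1)\to PU(n+1)\hookrightarrow H$, and $[\sigma]$ has order precisely $n+1$ in $\pi_1(H)$, so this map is injective. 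The summand $K=Ker(cw|_{\pi_1(Q)})$ is identified with its isomorphic image in $\pi_1(H)$, which yields the stated decomposition.

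The step I expect to be the main obstacle is the $U(n+1)$ input: setting up the central extension so that $\ker p_*$ is precisely the Reeb class, then correctly locating the generator $\sigma$ and the Reeb loop $\rho$ inside $\pi_1(Q)$, computing their Maslov indices through the linear action on the symplectization, and verifying the key relation $[\rho]=(n+1)[\sigma]$. Once these facts are secured, the splitting of $\pi_1(Q)$ and the final quotient are purely formal.
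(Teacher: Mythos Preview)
Your argument is correct and reaches the same conclusion, but the organization differs from the paper's. The paper works downstairs: it builds a homomorphism $\alpha:\pi_1(H)\to\R/\Z$ directly, by lifting a loop $\gamma$ in $H$ to a path in $Q$ via its mean-normalized Hamiltonian (so that the lift has $cw=0$) and declaring $\alpha(\gamma)$ to be the Reeb angle needed to close that lift into a loop; applying Theorem~\ref{equality of m and cw} to the closed-up loop gives $\alpha(\gamma)=\frac{1}{2(n+1)}\mu\in\frac{1}{n+1}\Z/\Z$, the kernel of $\alpha$ is identified with $Ker(cw|_{\pi_1(Q)})$, and surjectivity onto $\Z/(n+1)\Z$ is checked on the first-coordinate rotation $\theta$ (your $p_*[\sigma]$). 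You instead work upstairs: you invoke the long exact sequence of the extension $S^1\to Q\to H$ to present $\pi_1(H)$ as $\pi_1(Q)/\langle[\rho]\rangle$, split $\pi_1(Q)$ along $\mu$ using the section $[\sigma]$, and locate $[\rho]=(n+1)[\sigma]$ via $\pi_1(U(n+1))$. The paper's route is slightly more elementary in that it lifts loops to paths by hand and never needs $Q\to H$ to be a Serre fibration in the infinite-dimensional sense; yours is algebraically cleaner and makes the role of $U(n+1)\subset Q$ and the identity $[\rho]=(n+1)[\sigma]$ explicit. The two are essentially dual: your splitting section $2\mapsto[\sigma]$ corresponds to the paper's Reeb-holonomy map $\alpha$.
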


This reproves the well-known fact that for $\C P ^n$,
$\Z/{{(n+1)}\Z}$ embeds into $\pi_1(Ham)$.

Several proofs are already known - e.g.~\cite{reznikov,seidel,poly
with ml,pi_1 injects}

\begin{rmk} Also, from this follows a curious fact that the $(n+1)$-st power of any loop in $H$
based at identity lifts to a loop in $Q$ (based at identity).
\end{rmk}

\begin{rmk}
The splitting works because the groups are abelian. A proof of
this corollary can be found in section~\ref{proof of cor to m cw}.
\end{rmk}

The mixed action-Maslov invariant has a share of generalizations,
which are defined for all, not necessarily spherically monotone,
symplectic manifolds.

\begin{df}\label{generalized I}
{\it (Generalized mixed action-Maslov invariants,
along~\cite{lmp,kj,symplectures})}

 Let $M:=(M, \omega)$ be a symplectic manifold of
dimension 2n. $Ham:= Ham(M)$ - the group of Hamiltonian
diffeomorphisms.

The generalized mixed Action-Maslov Invariants are a family of
homomorphisms $\pi_1(Ham,Id) \rightarrow \R$, that are defined
similarly to the usual Chern numbers of complex vector bundles:

To a Hamiltonian loop based at identity, $\phi = \{\phi_t\}_{t \in
S^1} \subset Ham$ one can associate by the clutching construction a
Hamiltonian fiber bundle $\pi:P \rightarrow S^2$ over $S^2 = \C P^1$
with fiber $M$. This gives a one-to-one correspondence between
isomorphism classes of such Hamiltonian bundles over the 2-sphere
and $\pi_1(Ham, Id)$. The group operation in $\pi_1$ corresponds to
the "fibered"-connected sum operation of bundles up to isomorphism.
(\cite{lmp}).

For $a \in \pi_1(Ham, Id)$ we'll denote by $P_a$ the corresponding
isomorphism class of bundles (or rather it's representative).

Let $V:=Ker(\pi_*:TP \rightarrow TS^2) \subset TP$ be the vertical
(sub)bundle. It is a symplectic vector bundle of rank 2n, and so,
by the canonical-up-to-homotopy choice of a compatible almost
complex structure, it has well defined ("vertical") Chern classes
$$\{c_l:=c_l(V) \in H^{2l}(P,\Z)\}_{0 \leq l \leq n}. $$

Let $u \in H^2(P, \R)$ denote the \emph{coupling class} of the
Hamiltonian vector bundle $P$, defined by the conditions $$1)
u|_{fiber} = \omega; \,\,\, 2)\int_{fiber} u^{n+1} = 0
\Leftrightarrow u^{n+1} = 0.
$$

\begin{rmk}
The equivalence in 2) is specific to the case where the base is
2-dimensional (consult e.g.~\cite{kj})
\end{rmk}

\begin{rmk}
More generally, these characteristic classes can also be defined
as elements in the cohomology of corresponding classifying spaces.
(\cite{kj})
\end{rmk}

\bs

 Now choosing, as in the definition of Chern numbers, a
monomial $\alpha=(c_1)^{i_1} \cdot ... \cdot (c_n)^{i_n} u^j$ of
degree $2n+2$, define $I_{\alpha}: \pi_1(Ham,Id) \rightarrow \R$ by

$$I_{\alpha}(a):= \displaystyle
\int_{P_a} (c_1)^{i_1} \cdot ... \cdot (c_n)^{i_n} u^j.$$

\bigskip

$I_\alpha$ is a homomorphism, as is seen from the compatibility of
the clutching construction with the group operations (turn
to~\cite{lmp}).

\end{df}

\bs

A simple computation shows that in the monotone case, $I_{(c_1)^L
u^{n+1-L}}$ are all proportional to $I$.

\begin{cpt}\label{proportional}
For a monotone symplectic manifold $(M,\omega)$, with
$[\omega]=\kappa c_1(M)$,

$$I_{(c_1)^L
u^{n+1-L}} = - \frac{L}{\kappa^L} \cdot \mVol \cdot I$$
\end{cpt}

In particular, $I_{(c_1)u^n}= - \frac{1}{\kappa} \,\mVol \cdot I$,
and in the case $\kappa=1$,  $I_{(c_1)^{n+1}}= - (n+1) \,\mVol
\cdot I$.

\begin{proof}
For monotone symplectic manifolds, $u =\kappa c_1 + I \cdot
\pi^*(a)$, where $a$ is the generator of $H^2(S^2,\R)$, talking in
the language of symplectic fibrations~\cite{polterovich loops}. So
that $c_1 = \frac{1}{\kappa}(u - I \cdot \pi^*(a))$. Plugging this
in, we readily obtain the formula.
\end{proof}

\subsection{Futaki invariants} \label{futaki invts}

The mixed action-Maslov invariant and its generalizations happen
to be related to Futaki invariants.

\begin{df}\label{Futaki basic}
{\it(Futaki Invariant)}

Let $(M,\omega,J)$ be a compact Fano manifold. That is to say Kahler
and $[\omega]=c_1(M)$ . Denote by $\mathfrak{h}(M)$ the complex Lie
algebra of holomorphic sections of $T^{(1,0)}M$. Then the
\emph{Futaki Invariant} $F: \mathfrak{h}(M) \rightarrow \C$
introduced by A.Futaki in~\cite{f} is defined as follows:

By the Kahler property, $\omega$ is a closed, real, (1,1)-form in
the class $c_1(M)$. And the Ricci form $Ric(\omega)$ of $\omega$
is also a closed, real, (1,1)-form in the class $c_1(M)$ (by the
symmetry of the curvature tensor, and by Chern- Weil theory of
characteristic classes). Therefore by the $dd^c$-Lemma, there
exists a function $f_{\omega} \in C^\infty(M,i \R)$ unique up to
constant, such that,

$$Ric(\omega)- \omega =  \, \partial \overline{\partial}
f_{\omega}.$$

Given $Z \in \mathfrak{h}(M)$, define:
$$F(Z):= \int_{M} Z (f_{\omega}) \, \omega ^n$$

In~\cite{f} it is proven that the integral on the right hand side
does not depend on the Kahler form $\omega$ in the class $c_1(M)$,
and that $$F: \mathfrak{h}(M) \to \C $$ is a homomorphism of Lie
algebras.

\end{df}

\begin{rmk}
The vanishing of this invariant is a necessary condition for the
existence of a Kahler-Einstein metric (\cite{f}). In the case of
smooth toric manifolds this is also sufficient (\cite{wang zhu}).
\end{rmk}

\bs

Even though $F$ is defined for vector fields and $I$ - for loops,
when restricted to the group

$$K:=Iso_{\,0}(M):= (Ham(M,\omega) \bigcap Aut(M,J))_{0}$$

on which both are defined, such vector fields and loops are
essentially equivalent, since $$\pi_1(K) \otimes \R \cong
Lie(K)/[Lie(K),Lie(K)]$$

So that, by taking duals, one gets an equivalence between
homomorphisms of abelian groups $\pi_1(K) \rightarrow \R$ and
homomorphisms of Lie algebras $Lie(K) \rightarrow \R$.

\bs

 One then has the following equality:

\begin{thm}\label{fano equality}

Let $(M, J, \omega)$ be a compact Fano manifold.

Then $$F=- \mVol \, I$$ when restricted to $K=Iso_0(M,J,\omega)$.

The equality is understood via the isomorphism $Hom(\pi_1(K),\R)
\cong Hom(Lie(K),\R)$.
\end{thm}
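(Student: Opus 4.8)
The plan is to reduce the equality to one-parameter subgroups and then match two explicit formulas: one for $I$ coming from the clutching construction together with Computation~\ref{proportional}, and one for $F$ coming straight from its definition.

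\textbf{Step 1 (reduction to circles).} Both $I\colon\pi_1(K)\to\R$ and $F\colon Lie(K)\to\C$ are homomorphisms into abelian groups, hence annihilate $[Lie(K),Lie(K)]$; so under the isomorphism $Hom(\pi_1(K),\R)\cong Hom(Lie(K),\R)$ it suffices to test the asserted equality on $Lie(T)$ for a maximal torus $T\subset K$, and by $\R$-linearity on elements $X\in Lie(T)$ generating a circle subgroup $\gamma=\{\phi_t=\exp(tX)\}_{t\in\R/\Z}\subset K$. I would fix such an $X$, average $\omega$ over $K$ to make it $S^1$-invariant (this keeps $[\omega]=c_1(M)$ and, by Definition~\ref{Futaki basic}, leaves $F$ unchanged), let $H$ be the time-independent Hamiltonian of $\gamma$ with $\int_M H\,\omega^n=0$, pick a fixed point $p\in M^{S^1}$, and denote by $w_1(p),\dots,w_n(p)\in\Z$ the isotropy weights on $T_pM$.

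\textbf{Step 2 (the $I$-side).} Since $\phi_t(p)\equiv p$, the capped loop at $p$ is constant, so one may take a constant filling disk; with $\kappa=1$ this gives $I(\gamma)=-H(p)\mp\sum_j w_j(p)$, the Maslov term being $\pm\sum_j w_j(p)$ by the normalization of Definition~\ref{MAM} (in particular $H(p)\pm\sum_j w_j(p)$ is independent of $p$, which is the well-definedness of $I$). Equivalently, by Computation~\ref{proportional}, $-\mVol\cdot I(\gamma)=I_{(c_1)u^n}(\gamma)=\int_{P_\gamma}c_1(V)\cup u^n$. I would evaluate this in $S^1$-equivariant cohomology of $M$: $P_\gamma$ is the restriction over $\C P^1\subset\C P^\infty$ of the Borel fibration $M_{S^1}\to BS^1$, under which $c_1(V)$ is the image of $c_1^{S^1}(TM)=[Ric(\omega)+u\,h_\rho]$, where $h_\rho$ is the Ricci holomorphy potential normalized by $h_\rho(p)=\sum_j w_j(p)$, and the coupling class $u$ is the image of the zero-integral equivariant extension $[\omega+uH]$. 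Hence $\int_{P_\gamma}c_1(V)u^n$ is the coefficient of $u$ in the equivariant pushforward $\int_M^{S^1}\bigl((Ric(\omega)+u\,h_\rho)(\omega+uH)^n\bigr)$, which expands (alternatively, localizes at $M^{S^1}$) to give
\[
-\mVol\cdot I(\gamma)\;=\;n\!\int_M H\,Ric(\omega)\wedge\omega^{n-1}\;+\;\int_M h_\rho\,\omega^n .
\]

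\textbf{Step 3 (the $F$-side and the match).} I would contract the defining relation $Ric(\omega)-\omega=\partial\overline{\partial} f_\omega$ with $X$. As $X$ is holomorphic and $\omega,Ric(\omega)$ are $S^1$-invariant, one has $\iota_X\omega=-\overline{\partial} H$, $\iota_X Ric(\omega)=-\overline{\partial} h_\rho$, and $\iota_X\partial\overline{\partial} f_\omega=\overline{\partial}(Xf_\omega)$, so comparing $(0,1)$-parts gives $Xf_\omega=h_\rho-H+c$ for a constant $c$ pinned down by evaluation at $p$ (where $X$ vanishes). Integrating against $\omega^n$ and using $\int_M H\omega^n=0$ rewrites $F(X)$ as a combination of the same two integrals $\int_M h_\rho\,\omega^n$ and $\int_M H\,Ric(\omega)\wedge\omega^{n-1}$ --- up to a universal constant, of $\int_M H\bigl(\mathrm{scal}(\omega)-\overline{\mathrm{scal}}\bigr)\omega^n$, the classical scalar-curvature form of the Futaki invariant. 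I would also record the auxiliary identity
\[
\int_M h_\rho\,\omega^n\;=\;-\,n\!\int_M H\,Ric(\omega)\wedge\omega^{n-1}\;+\;(\text{normalization constant})\cdot\mVol ,
\]
obtained by writing $Ric(\omega)-\omega=d\beta$ with $\beta$ $S^1$-invariant and integrating by parts (using $\int_M H\omega^n=0$). Feeding this into the formulas of Steps 2 and 3 and cancelling the $Ric(\omega)\wedge\omega^{n-1}$-terms then yields $F(X)=-\mVol\cdot I(\gamma)$.

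\textbf{Main obstacle.} The substantive work is bookkeeping of additive constants and signs. The Hamiltonian normalization built into $I$ ($\int_M H\omega^n=0$) differs from the weight / equivariant-$c_1$ normalization natural on the $F$-side by a constant, and the argument closes precisely because that discrepancy is $-\mVol\cdot I(\gamma)$ itself --- this is where the Fano hypothesis and the relation $u^{n+1}=0$ on $P_\gamma$ (equivalently, the $p$-independence of $H(p)\pm\sum_j w_j(p)$ from Step 2) enter. The sign conventions --- the Maslov normalization of Definition~\ref{MAM}, the sign in $\iota_{X_H}\omega=\pm dH$, and the factor $i$ from the imaginary-valued $f_\omega$ --- must be fixed coherently; they are pinned down by the consistency check on $(\C P^n,\omega_{FS})$, where $I\equiv 0$ by Fact~\ref{Seidel} and $F\equiv 0$ since the Fubini--Study metric is Kähler--Einstein.
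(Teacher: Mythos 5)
Your Step 1 is fine and agrees with the paper, and each identity you invoke is (up to the sign bookkeeping you defer) true; the problem is that they are not independent, so the final ``cancellation'' does not reach the theorem. Write $A=\int_M h_\rho\,\omega^n$, $B=n\int_M H\,Ric(\omega)\wedge\omega^{n-1}$, and let $c_0=h_\rho(p)-H(p)$ be the constant comparing the two normalizations at a fixed point. Your auxiliary identity, once its ``normalization constant'' is actually computed (write $Ric(\omega)-\omega=d\beta$ with $\beta$ invariant, note $d(\beta(X))=d(h_\rho-H)$ and evaluate at $p$, where $\beta(X)$ vanishes), reads $A+B=c_0\,\mVol$; your fixed-point formula for $I$ reads $c_0=-I(\gamma)$; and your Step 2 formula $-\mVol\, I(\gamma)=A+B$ is exactly the conjunction of these two. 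So combining Step 2 with the auxiliary identity is a tautology, and it supplies no constraint beyond the fixed-point formula. On the other side, Step 3 gives $F(Z)=\pm\bigl(A+I(\gamma)\,\mVol\bigr)$, and feeding the auxiliary identity into it merely eliminates $A$ in favour of $B$, yielding the classical scalar-curvature expression $F(Z)=\mp B$ --- true, but not the theorem. At this point the assertion $F=-\mVol\, I$ is precisely equivalent to the still unproved identity $n\int_M H\,Ric(\omega)\wedge\omega^{n-1}=I(\gamma)\,\mVol$ (equivalently $\int_M h_\rho\,\omega^n=-2I(\gamma)\,\mVol$), and nothing in your outline produces it: you have two independent relations among the three quantities $A,B,F$, which is one short. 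Note also that your proposed sign/constant check on $(\C P^n,\omega_{FS})$ is vacuous, since both $F$ and $I$ vanish identically there and so cannot detect this gap (nor pin down signs).

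The missing input is exactly where the paper does genuine complex-geometric work: instead of the Ricci potential of $\omega$, it uses the Calabi--Yau form $\eta$ with $Ric(\eta)=\omega$ and the Futaki--Morita formula $F(Z)=\frac{1}{i}\int_M div_\eta(Z)\,\omega^n$. The canonical lift of the circle to the unit circle bundle of the canonical bundle (metrized by $\eta$) has contact Hamiltonian $h$ which is an honest $\omega$-Hamiltonian --- precisely because $Ric(\eta)=\omega$ --- so $h=H+c$ with $c$ constant; its value at a fixed point is $-\,$Maslov$/2$, whence $c=I(\gamma)$ and $F=-\int_M h\,\omega^n=-\mVol\, I$. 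Alternatively, the paper's remark derives Theorem~\ref{fano equality} from Theorem~\ref{generalized equality} together with the Futaki--Morita identity $F_{(c_1)^{n+1}}=(n+1)F$. Either way, a Monge--Amp\`ere/Calabi--Yau or Futaki--Morita-type input is indispensable for converting the $Ric(\omega)$-normalized data into the $\omega$-normalized data measured by $I$; your proposal has no substitute for it, and without it the argument closes only up to the unproved identity above.
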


There are lots of generalizations of the Futaki invariants, with
perhaps the most inclusive family being the one introduced
in~\cite{f2004}. We present the original definition in a slightly
rewritten form, for the case of the frame bundle of the holomorphic
tangent bundle of a compact Kahler manifold.

\begin{df}
{\it(Generalized Futaki Invariants)}

Let $(M,\omega,J)$ be a compact Kahler manifold. Denote by
$\mathfrak{h}_0(M)$ the space of holomorphic vector fields in
$\mathfrak{h}(M)$ that have a zero. It's known that
$$\mathfrak{h}_0(M) = \{Z \in \mathfrak{h}(M) | \exists! f_Z \in
C^{\infty}(M,\C) \text{ s.t. } i_Z \omega = - \bar{\partial}f_Z \
\& \ \int_M f_Z \omega^n = 0 \}$$ (e.g.~\cite{lebrun simanca})

Consider the frame bundle $Fr$ of $T^{(1,0)}(M)$. This is a
principal holomorphic $GL(n,\C)$ bundle over $M$. The automorphism
group $Aut(M,J)$ acts on $Fr$ commuting with the action of
$GL(n,\C)$.

The invariant polynomials on $Lie(GL(n,\C))$ are generated by
elementary symmetric polynomials in the eigenvalues, which are
taken with suitable coefficients to correspond to Chern classes.
Denote them by $c_1,...,c_n$.

Take $Z \in \mathfrak{h}_0$. Since $Aut(M,J)$ acts on $Fr$ by
bundle maps, $Z$ lifts to a holomorphic vector field $\widehat{Z}$
on $Fr$.

Set $w_Z:=\omega + f_Z$. This is a form of mixed degree.

Choose a $(1,0)$ connection form $\theta$ on $Fr$. Let $\Theta$ be
the corresponding curvature form.

Take a polynomial $(c_1)^{i_1} \cdot ... \cdot (c_n)^{i_n}$ of
degree $k \leq n$, and take ${w_Z}^l$, s.t. $k+l = n$.

Denote $\alpha := (c_1)^{i_1} \cdot ... \cdot (c_n)^{i_n} w^l$.

Then $$F_{\alpha}(Z):= \int_M (c_1)^{i_1} \cdot ... \cdot
(c_n)^{i_n} (\theta(\widehat{Z})+ \Theta){w_Z}^l. $$

It is shown in~\cite{f2004} that $F_{\alpha}(Z)$ is independent of
the choice of the $(1,0)$ connection form $\theta$, and defines a
homomorphism of Lie algebras:

$$F_{\alpha}:\mathfrak{h}_0(M) \to \C$$
\end{df}

\begin{rmk}
$F_{\alpha}$ also depends only on the cohomology class of the
Kahler form $\omega$.
\end{rmk}

\begin{rmk}
The invariant $F_{c_1 w^n}$, also known as the Bando-Calabi-Futaki
character, is an obstruction to the existence of a constant scalar
curvature Kahler metric with the Kahler form in the given cohomology
class. It also has a definition similar to Definition~\ref{Futaki
basic} (peruse~\cite{f2004} and the references therein)
\end{rmk}

\begin{rmk}
Following~\cite{fm,fm2}, one can relate the invariants $F_{\alpha}$
to a type of equivariant cohomology.
\end{rmk}

 Theorem~\ref{fano equality} then generalizes to the following:

\begin{thm}\label{generalized equality}

Let $(M, J, \omega)$ be a compact Kahler manifold.

Then $$F_{\alpha}=I_{\alpha}$$ when restricted to
$K=Iso_0(M,J,\omega)$.

The equality is understood via the isomorphism $Hom(\pi_1(K),\R)
\cong Hom(Lie(K),\R)$.
\end{thm}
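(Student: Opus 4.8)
The plan is to prove Theorem~\ref{generalized equality} by reducing it to a fiberwise-integration statement over $S^2$, just as Theorem~\ref{fano equality} is the special case $\alpha = (c_1)^{n+1}$ (up to the proportionality constants recorded in Computation~\ref{proportional} and Theorem~\ref{fano equality}). Fix $Z \in \mathfrak{h}_0(M)$ and let $\phi = \{\phi_t\}_{t \in S^1} \subset K$ be the Hamiltonian loop it generates, with $a = [\phi] \in \pi_1(K) \subset \pi_1(Ham)$; we must show $F_\alpha(Z) = I_\alpha(a) = \int_{P_a} (c_1(V))^{i_1}\cdots(c_n(V))^{i_n} u^j$. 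The key geometric input is that because $\phi$ consists of biholomorphisms, the Hamiltonian bundle $P_a \to S^2$ obtained by clutching carries a \emph{holomorphic} structure: it is the total space of the fiber bundle associated to the principal $\C^*$-bundle $\C^2 \setminus \{0\} \to \C P^1$ via the one-parameter group $\{\phi_t\}$ complexified to a $\C^*$-action. Concretely $P_a = (\C^2 \setminus \{0\}) \times_{\C^*} M$, a compact complex manifold fibered holomorphically over $\C P^1$ with fiber $M$; the vertical bundle $V$ is then a holomorphic vector bundle isomorphic to the pullback of $T^{(1,0)}M$ along the fiberwise identification.

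First I would set up this holomorphic model carefully and identify the integrand on $P_a$ with data on $M$. The vertical Chern classes $c_l(V)$ are represented by Chern--Weil forms of a connection on $V$; choosing the connection induced from the $(1,0)$-connection $\theta$ on the frame bundle $Fr$ of $T^{(1,0)}M$ together with the natural flat connection in the $S^2$-direction coming from the $\C^*$-action, the curvature of $V$ along $P_a$ is expressed in terms of $\Theta$ and the infinitesimal generator $\widehat{Z}$ — this is exactly where the term $\theta(\widehat{Z}) + \Theta$ in the definition of $F_\alpha$ comes from (it is the equivariant curvature / moment of the $S^1$-action). Similarly, the coupling class $u \in H^2(P_a, \R)$ is represented by the closed form whose fiberwise restriction is $\omega$ and which is normalized by $u^{n+1} = 0$; under the holomorphic model the normalized representative of $u$ restricted near a fiber, together with its "moment" in the $S^2$-direction, assembles precisely into $w_Z = \omega + f_Z$, because $f_Z$ (normalized by $\int_M f_Z \omega^n = 0$) is the Hamiltonian and the normalization $\int_{fiber} u^{n+1} = 0$ is the fiber-integrated version of $\int_M f_Z \omega^n = 0$. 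So the integrand $(c_1)^{i_1}\cdots(c_n)^{i_n} u^j$ on $P_a$ pulls back, under the Cartan-model / localization identification, to the mixed-degree form $(c_1)^{i_1}\cdots(c_n)^{i_n}(\theta(\widehat{Z})+\Theta) w_Z^l$ on $M$.

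Second I would carry out the fiber integration $\int_{P_a} = \int_{S^2} \int_{fiber}$. The cleanest way is via the Atiyah--Bott--Berline--Vergne localization formula, or equivalently Duistermaat--Heckman, for the $S^1$-action on $P_a$ whose fixed-point set relevant to the $S^2$-factor sits over the two poles $0, \infty \in \C P^1$ where the $\C^*$-action on $\C^2 \setminus\{0\}$ has its fixed lines. Integrating over $S^2$ a form that is polynomial in the moment variable reproduces the "differentiate and evaluate at the fixed fiber" recipe, which is exactly how the Futaki-type integral $\int_M (c_1)^{i_1}\cdots(\theta(\widehat{Z})+\Theta) w_Z^l$ arises from the Chern--Weil integrand on the total space. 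Both sides being homomorphisms $\pi_1(K) \to \R$ (resp. $Lie(K) \to \C$, but the values are real on $\mathfrak{h}_0$ because $f_Z$ can be taken $i\R$-valued in the Fano normalization and purely imaginary in general — one must track this) and $\pi_1(K)\otimes\R \cong Lie(K)/[Lie(K),Lie(K)]$ being spanned by such one-parameter loops, the identity on generators suffices.

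The main obstacle I expect is the bookkeeping in the middle step: matching the \emph{normalizations} and the \emph{moment-map shifts} on the nose. Specifically, showing that the normalized coupling-class representative on $P_a$ restricts to exactly $w_Z = \omega + f_Z$ with the right additive constant requires using $\int_M f_Z \omega^n = 0 \Leftrightarrow u^{n+1} = 0$ together with a careful choice of the closed extension of $\omega$ over $P_a$ — there is a one-parameter family of such extensions differing by $\pi^*(\text{area form})$, and only one gives the coupling class. Likewise, the identification of $c_l(V)$'s equivariant lift with $c_l(\theta(\widehat Z)+\Theta)$ needs the connection on $V$ to be pulled back from $Fr$ \emph{$\C^*$-equivariantly}, which uses that $\widehat{Z}$ lifts the action to $Fr$ holomorphically — precisely the hypothesis that the loop lies in $Aut(M,J)$. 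Once these two identifications are pinned down, the localization/fiber-integration computation is essentially the classical derivation of Futaki-type invariants from characteristic forms, and the proportionality constants are already accounted for by the monotone special cases established earlier. Hence I would present the argument as: (i) holomorphic clutching model for $P_a$; (ii) equivariant Chern--Weil representatives of $c_l(V)$ and $u$; (iii) fiber integration over $S^2$ via localization yielding the Futaki integrand; (iv) conclude on generators and invoke the $Hom$-isomorphism.
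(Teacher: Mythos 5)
Your proposal is correct and takes essentially the same route as the paper: reduce to circle subgroups of $K$, identify the clutched bundle $P_a$ with the associated (Borel-type) bundle over $\C P^1$, represent the coupling class by $\omega+f$ and the vertical Chern classes by equivariant Chern--Weil forms $c_r(\theta(\widehat X)+\Theta)$ for a $(1,0)$-connection, and match the resulting integrand with Futaki's. The only difference is one of implementation: the paper performs the fiber integration directly in the Cartan model (using that $P_a$ is the restriction of $M\times_{S^1}S^{\infty}$ to $\C P^1$, so $H^*(P_a,\R)\cong H_{S^1}(M,\R)\otimes_{\R[z]}\R[z]/z^2\R[z]$) and isolates the real-to-holomorphic Hamiltonian matching as the lemma $i_X\omega=-df \Leftrightarrow i_Z\omega=-\bar{\partial}f$ with $Z=(X-iJX)/2$, rather than invoking Atiyah--Bott--Berline--Vergne localization.
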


\begin{rmk}
Theorem~\ref{fano equality} follows from Theorem~\ref{generalized
equality} for the following reason: in~\cite{fm} (Proposition 2.3)
it is proven that for Fano Kahler manifolds, $F_{(c_1)^{n+1}}= (n+1)
F$. Also from Computation~\ref{proportional}, one has
$I_{(c_1)^{n+1}}= - (n+1) \,\mVol \cdot I$. So $F_{(c_1)^{n+1}}=
I_{(c_1)^{n+1}}$ means $F = - \mVol \, I$.
\end{rmk}

\begin{rmk} \label{phenomenon}
Theorem~\ref{generalized equality} yields the equality $-I_{c_1
u^n}/\mVol= I = I_{c_n u}/{Euler(M)}$ on $K$ for Fano manifolds.

Indeed, talking in the language of symplectic fibrations, $u = c_1 +
I \cdot \pi^*(a)$ for Fano manifolds, where $a$ is the generator of
$H^2(S^2,\R)$, by~\cite{polterovich loops}. So $ I_{c_n u}(\gamma) =
I_{c_n c_1} + I <c_n \pi^*(a), [P_{\gamma}]> = I_{c_n c_1} + I \cdot
Euler(M)$. By Theorem~\ref{generalized equality}, $I_{c_n c_1} =
F_{c_n c_1}$ on $K$. And the latter vanishes on its domain of
definition (\cite{fm,fm2,f book}). So $ I_{c_n u}(\gamma) = I \cdot
Euler(M)$, and the rightmost equality is proven. The leftmost
equality follows from Computation~\ref{proportional}.

\end{rmk}

\begin{rmk}
Theorem~\ref{generalized equality} also lets one recover the toric
computations of $I_{c_1 u^n}$ in~\cite{vinew} from the computations
of $F_{c_1 w^n}$ in~\cite{nakagawa1,nakagawa2}.
\end{rmk}

\subsection{Example: the case of toric loops}\label{toric computations}

Although the computation of $I$ on all loops on toric manifolds yet
remains an open issue, one can compute its restriction to those
Hamiltonian loops that come from the torus action. We'll address
these as "toric loops".

A homomorphism on $\pi_1(T)$ can be considered as a vector in
$Lie(T)^*$. The corresponding vectors are most conveniently
expressed through the following notion of \emph{barycenters}
$\{B_k\}_{0 \leq k \leq n}$ of Delzant polytopes:

\begin{df}
{\it (k-dimensional measure on $\Delta$)} As the faces of the
polytope are rational, there is a lattice on each induced from the
integer lattice of the ambient space. This lattice defines up to a
multiplicative constant a measure on the face. The constant can be
normalized in such a way that any fundamental parallelotope has
measure 1.

The \emph{k-dimensional measure} is the sum of such measures over
all $k$-dimensional faces.

\end{df}

\begin{df}
{\it (The k-th barycenter)}

 $B_k := \text{barycenter of the k-dimensional measure.}$
\end{df}

\begin{rmk}
Note that the k-dimensional measure is the push forward by the
moment map of $(\omega^k)/{k!}$ restricted to the corresponding
$2k$ dimensional symplectic invariant subspaces of $M$.
\end{rmk}

Consider a Delzant Polytope with vertices $\{P_k\}_{1 \leq K \leq
V}$, and faces $\{F_j\}_{1 \leq j \leq F}$ given by primitive
"normals" $\{l_j\}_{1 \leq j \leq F}$ in the integer lattice of
the dual space. So that $$\Delta=\cap_{j=0}^F \{l_j \leq
\kappa_j\}$$ for some support numbers
$\overrightarrow{\kappa}=(\kappa_1,...,\kappa_F)$.

For monotone toric manifolds, $I$ was computed in~\cite{rigid}. The
result is expressed as follows in our notations:
$$I = - B_0 + B_n$$.

For general toric manifolds, $I_{c_1 u^n}$ was computed
in~\cite{vinew}. The result is expressed as follows in our
notations:
$$I_{c_1 u^n} = -n! \text{Vol}_{(n-1)}(\Delta)(B_{n-1} - B_n)$$

\begin{rmk}
Comparing the formulas and using Computation~\ref{proportional}, one
obtains $$B_0-B_n = - C_{\Delta} (B_{n-1} - B_n)$$ where $C_{
\Delta}=\frac{\kappa
\text{Vol}_{n-1}(\Delta)}{\text{Vol}_n(\Delta)}$ is a positive
constant. Which means, curiously enough, that the three barycenters
$B_0,B_{n-1}$ and $B_n$ are collinear for mononotone toric
manifolds, and are either all distinct or all equal.

\end{rmk}

\begin{rmk}
Another way to get the collinearity of $B_0, B_{n-1},B_n$ in the
toric Fano case is to compare the two toric computations of $F$
in~\cite{mabuchi}, and~\cite{donaldson}.
\end{rmk}

What follows is a uniform computation of all $\displaystyle
\{I_{c_L u^{n+1-L}}\}_{0 \leq L \leq n}$.

\begin{thm}\label{bary I}

$I_{c_L u^{n+1-L}} / (n-L)! \text{Vol}_{(n-L)}(\Delta)=
-(n+1-L)(B_{(n-L)} - B_n)$.

\end{thm}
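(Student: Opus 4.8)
The plan is to reduce the computation of $I_{c_L u^{n+1-L}}$ on a toric loop to an integral over the moment polytope $\Delta$, using the standard description of the Hamiltonian bundle $P_a$ over $S^2$ associated to a circle subgroup of the torus. First I would fix a circle $\{\phi_t\}$ in $T$ generated by a lattice vector $\xi \in \mathrm{Lie}(T)$ and recall that the associated bundle $P_\xi \to S^2$ is itself a toric manifold: it is the symplectic quotient construction giving a bundle whose fibers are $M$, and whose total space carries an effective action of a torus of one higher dimension, with a moment polytope $\widetilde{\Delta} \subset \Delta \times \R$ which is (a truncation of) the product $\Delta \times [0,1]$ sheared by the linear functional corresponding to $\xi$. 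The coupling class $u$ and the vertical Chern classes $c_l = c_l(V)$ are then expressible in terms of the equivariant data: $u$ restricts to $[\omega]$ on each fiber and integrates to $0$ along the fiber, while $c_l(V)$ is the $l$-th Chern class of the vertical tangent bundle, which for a toric bundle is pulled back from the toric divisors corresponding to the facets $F_j$ of $\Delta$ (the "horizontal" facets of $\widetilde{\Delta}$), not the two facets coming from the base $S^2$.

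The core computation is then a localization / Duistermaat--Heckman type argument. I would write $\int_{P_\xi} c_1(V)^{i_1}\cdots c_n(V)^{i_n} u^j$ as an integral over $\widetilde{\Delta}$ against the Lebesgue measure, using that for toric manifolds $\int_X \prod(\text{divisor classes}) \cdot [\omega_X]^k / k!$ becomes the mixed-volume/integral of the corresponding piecewise-polynomial density over the polytope. For the monomials $\alpha = c_1^L u^{n+1-L}$ specifically, the class $c_1(V) = \sum_j D_j$ where $D_j$ are the vertical toric divisors, and $u$ plays the role of the fiberwise symplectic class; pushing forward along the $S^2$ direction (the Duistermaat--Heckman pushforward for the extra circle), $\int_{P_\xi} c_1(V)^L u^{n+1-L}$ collapses to a boundary term: an integral of $c_1^L$-type density over the facets of $\Delta$ weighted by the moment data of $\xi$. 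This is exactly the mechanism by which $B_{n-L}$ — the barycenter of the $(n-L)$-dimensional measure, i.e. the pushforward of $\omega^{n-L}/(n-L)!$ restricted to the union of codimension-$L$ faces — enters, and the two appearances $B_{n-L} - B_n$ come from the two ways the fiber sits over the north and south poles of $S^2$ (the clutching construction), one contributing with each sign, just as in the known case $I = -B_0 + B_n$ (the $L=n$ case, up to the normalizing volume factor) from~\cite{rigid} and the $L=1$ case $I_{c_1 u^n} = -n!\,\mathrm{Vol}_{(n-1)}(\Delta)(B_{n-1}-B_n)$ from~\cite{vinew}.

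Concretely the steps are: (1) identify $P_\xi$ and its polytope $\widetilde\Delta$, and express $u$, $c_l(V)$ as explicit (equivariant) cohomology classes supported on the vertical divisors; (2) reduce $\int_{P_\xi}\alpha$ to an integral over $\widetilde\Delta$ via the toric dictionary between products of divisor classes and polytope volumes; (3) perform the fiber integration over the $S^2$-direction, producing a sum of two boundary integrals over $\partial\Delta$ (north and south poles), each of which is recognized as $(n+1-L)$ times the $(n-L)$-dimensional measure paired with the coordinate functions, i.e. the numerator of $B_{n-L}$ times $(n-L)!\,\mathrm{Vol}_{(n-L)}(\Delta)$; (4) assemble, checking signs against the $L=1$ and $L=n$ special cases to fix the overall normalization. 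The main obstacle I anticipate is step (1)--(3): keeping precise track of the vertical versus horizontal divisor classes on $P_\xi$ and correctly identifying which facet contributions survive the fiber pushforward, since $c_1(V)$ involves only the vertical facets while $u$ mixes the fiber and base — getting the combinatorial bookkeeping of the truncated prism $\widetilde\Delta$ right, and verifying that the facet integral genuinely assembles into the $(n-L)$-dimensional barycenter rather than some other weighted average, is where the real work lies. Matching with Computation~\ref{proportional} and the two known special cases serves as the consistency check that pins down the constant $(n+1-L)$ and the factor $(n-L)!\,\mathrm{Vol}_{(n-L)}(\Delta)$.
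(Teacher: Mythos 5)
There is a genuine gap: you have misread the monomial. In Theorem~\ref{bary I} the class $c_L$ is the $L$-th Chern class of the vertical bundle $V$, not $(c_1)^L$ (contrast the notation $I_{(c_1)^L u^{n+1-L}}$ of Computation~\ref{proportional} with $I_{c_L u^{n+1-L}}$ here; see also the remark $I_{c_n u}/\mathrm{Vol}_0(\Delta)=-B_0+B_n$, where $c_n$ is the Euler class). Your core computation specializes to $\alpha=c_1(V)^L u^{n+1-L}$ with $c_1(V)=\sum_j D_j$, and for $2\le L\le n$ this is a different invariant: $\bigl(\sum_j D_j\bigr)^L$ contains self-intersection terms and is Poincar\'e dual to a cycle which is \emph{not} the codimension-$L$ toric skeleton, so the boundary bookkeeping of your step (3) cannot assemble into the $(n-L)$-dimensional measure and its barycenter $B_{n-L}$ (already for $\C P^2$ one has $c_1^2=9[\mathrm{pt}]$ while $c_2=3[\mathrm{pt}]=\mathrm{Vol}_0(\Delta)\,[\mathrm{pt}]$). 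The reason $B_{n-L}$ appears at all is that $c(V)=\prod_j(1+D_j)$ over the vertical toric divisors, hence $c_L(V)$ is Poincar\'e dual to the sum of the fibered closed invariant submanifolds lying over the $(n-L)$-dimensional faces of $\Delta$; this identification, which the paper carries out via equivariant meromorphic sections and~\cite{kj}, is exactly the step your plan is missing, and it is also the step you flag as ``where the real work lies.''

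Two further inaccuracies in the mechanism you propose. First, the difference $B_{n-L}-B_n$ does not come from north- versus south-pole contributions: in the paper's computation the coupling class is represented by the explicit form of~\cite{polterovich loops} built from the \emph{mean-normalized} Hamiltonian $f$, the integral over the chain $F_{n-L}$ localizes to a single disk and equals $(n+1-L)\int_N f\,\omega^{n-L}$ with $N$ the preimage of the $(n-L)$-skeleton, and $B_n$ enters solely through the normalization $f=\pm(\langle m,l\rangle-\langle B_n,l\rangle)$. Second, $u$ is not a symplectic class, so the Duistermaat--Heckman pushforward you invoke requires writing $u$ as a symplectic class on the total space minus a multiple of the base class; that multiple is again the normalization constant $\langle B_n,l\rangle$. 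If you make both corrections --- replace $c_1^L$ by $c_L$, dual to the codimension-$L$ invariant skeleton, and keep track of the normalization of $u$ --- your toric-dictionary route reduces to essentially the paper's argument; as written, it computes a different family of invariants and would not yield the stated formula.
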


\begin{rmk}
In particular, $I_{c_n u}/\text{Vol}_0(\Delta)= -B_0 + B_n$, on the
torus.
\end{rmk}

\begin{rmk}\label{phenomenon toric} An explanation of the collinearity phenomenon in the toric Fano case
follows from this equality and Remark~\ref{phenomenon}, via
$\text{Vol}_0(\Delta)$ = number of the vertices in the polytope =
$Euler(M)$.

\end{rmk}

\bs

This computation has a corollary related to a result in~\cite{poly
with ml}:

Consider a symplectic toric manifold, and a Hamiltonian loop
$\gamma$ coming from the toric action. This loop corresponds to a
point $l$ in the integer lattice in $Lie(T)$.

Let the moment polytope be
$\Delta=\Delta(\overrightarrow{\kappa}(0))=\cap_{j=0}^F \{l_j \leq
\kappa_j(0)\}$ for some support numbers
$\overrightarrow{\kappa}(0)=(\kappa_1(0),...,\kappa_F(0))$.

Denote by $C$ the chamber of all support numbers
$\overrightarrow{\kappa}$ for which the polytope
$\Delta(\overrightarrow{\kappa}) $ is analogous to the original one
(\cite{poly with ml}). One can think of continuous deformations of
$\Delta$ in the space of Delzant polytopes with the given conormals.

In~\cite{poly with ml} it is proven that if  $\gamma$ is
contractible in $Ham$, then the function
$f(\overrightarrow{\kappa}):=<B_n(\overrightarrow{\kappa}), l>$ is a
linear function of $\overrightarrow{\kappa}$ with integer
coefficients (that is, the restriction of such a function to $C$).

In other words, $l$ considered as a function on $Lie(T)^*$ is
\emph{mass-linear} with integer coefficients.

A lemma based on Moser's homotopy method is used, saying that if
such a toric loop is contractible in $Ham$ for the original
polytope, then it will be contractible for all
$\overrightarrow{\kappa}$ in an open neighbourhood $U$ of the
original $\overrightarrow{\kappa}(0)$.

 Note that as $<B_n(\overrightarrow{\kappa}), l>$ is a priori a
 rational function of $\overrightarrow{\kappa}$, it's enough to show
 linearity on such an open set.

\bs

 Theorem~\ref{bary I} lets one prove a related result in the
following manner:

\begin{cor} \label{mass linear}
Assume a loop $\gamma$ in $Ham$ coming from the toric action is
contractible. Let $l$ be the corresponding point in the integer
lattice in $Lie(T)$.

\bs

 Then $$<B_n(\overrightarrow{\kappa}), l> =
 <B_0(\overrightarrow{\kappa}),l>$$

 (which is a linear function of $\overrightarrow{\kappa}$ with coefficients in $\frac{1}{Vol_0}\Z$) \bs

\bs

and $$<B_n(\overrightarrow{\kappa}), l> =
 <B_k(\overrightarrow{\kappa}),l>$$

 for all other $k$, as well.
\end{cor}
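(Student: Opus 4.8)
The plan is to deduce everything from Theorem~\ref{bary I} together with Computation~\ref{proportional} and the vanishing of $I$ on contractible loops. First I would recall that if $\gamma$ is contractible in $Ham$, then the class $a=[\gamma]\in\pi_1(Ham)$ is trivial, hence $I_\alpha(a)=0$ for every admissible monomial $\alpha$; in particular $I_{c_L u^{n+1-L}}(\gamma)=0$ for all $0\le L\le n$. Now Theorem~\ref{bary I} states
\[
I_{c_L u^{n+1-L}}(\gamma)\,/\,\big((n-L)!\,\mathrm{Vol}_{(n-L)}(\Delta)\big)= -(n+1-L)\,\langle B_{(n-L)}-B_n,\,l\rangle,
\]
where I have paired the vector-valued barycenter difference with the lattice vector $l\in Lie(T)$ defining the toric loop. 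Since $\mathrm{Vol}_{(n-L)}(\Delta)>0$ and $(n+1-L)>0$ for $0\le L\le n$, vanishing of the left-hand side forces $\langle B_{(n-L)}-B_n,\,l\rangle=0$, i.e. $\langle B_k(\overrightarrow{\kappa}),l\rangle=\langle B_n(\overrightarrow{\kappa}),l\rangle$ for every $k$ with $0\le k\le n$. Taking $L=n$ (so $k=0$) gives the first displayed identity $\langle B_n,l\rangle=\langle B_0,l\rangle$, and the general $k$ follows by taking $L=n-k$.

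Next I would address the assertion that this common value is a linear function of $\overrightarrow{\kappa}$ with coefficients in $\tfrac{1}{\mathrm{Vol}_0}\Z$. The key point is that contractibility is an open condition on the support numbers: by the Moser-homotopy lemma quoted in the text (from~\cite{poly with ml}), if $\gamma$ is contractible for $\overrightarrow{\kappa}(0)$ then it remains contractible for all $\overrightarrow{\kappa}$ in an open neighbourhood $U\subset C$ of $\overrightarrow{\kappa}(0)$. Hence the identity $\langle B_n(\overrightarrow{\kappa}),l\rangle=\langle B_0(\overrightarrow{\kappa}),l\rangle$ holds identically on $U$. Now $\langle B_0(\overrightarrow{\kappa}),l\rangle$ is transparently affine-linear in $\overrightarrow{\kappa}$: the $0$-dimensional measure is just the counting measure on the vertex set, each vertex $P_K(\overrightarrow{\kappa})$ of a Delzant polytope depends affine-linearly on $\overrightarrow{\kappa}$ within the chamber $C$ (it is the solution of the linear system determined by the $n$ conormals meeting at that vertex), and $\mathrm{Vol}_0(\Delta)=V$ is the constant number of vertices. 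Thus $\langle B_0(\overrightarrow{\kappa}),l\rangle=\tfrac{1}{V}\sum_{K=1}^{V}\langle P_K(\overrightarrow{\kappa}),l\rangle$ is linear in $\overrightarrow{\kappa}$ with coefficients in $\tfrac{1}{V}\Z=\tfrac{1}{\mathrm{Vol}_0}\Z$ (integrality coming from $l$ being a lattice vector and the vertex-coordinate maps having integer entries in the given conormal basis). Since two rational functions of $\overrightarrow{\kappa}$ agreeing on an open set agree everywhere on $C$, the formula for $\langle B_n(\overrightarrow{\kappa}),l\rangle$ holds on all of $C$.

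The only real subtlety — and the step I would be most careful about — is the bookkeeping that turns the scalar statement of Theorem~\ref{bary I} into the vector (pairing-with-$l$) statement used here: one must check that the homomorphism $\pi_1(T)\to\R$ represented by $I_{c_L u^{n+1-L}}$ corresponds, under the identification $Hom(\pi_1(T),\R)\cong Lie(T)^*$, to the covector $-(n+1-L)(n-L)!\,\mathrm{Vol}_{(n-L)}(\Delta)\,(B_{(n-L)}-B_n)$, so that evaluating on the loop $\gamma\leftrightarrow l$ is literally the pairing $\langle\,\cdot\,,l\rangle$. This is essentially the content of the displayed formula in Theorem~\ref{bary I} read as an identity of covectors, but I would state it explicitly to make the passage to Corollary~\ref{mass linear} clean. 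Everything else is the elementary observations above plus the openness of contractibility.
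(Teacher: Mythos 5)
Your proposal is correct and follows essentially the same route as the paper: apply Theorem~\ref{bary I} to the monomials $c_{n-k}u^{k+1}$, use the vanishing of the homomorphisms $I_\alpha$ on the trivial class, invoke the Moser-homotopy openness of contractibility to get the identity on a neighbourhood $U$, and extend to the whole chamber $C$ by rationality in $\overrightarrow{\kappa}$. Your extra remarks on the covector bookkeeping and on the $\frac{1}{\mathrm{Vol}_0}\Z$-linearity of $\langle B_0(\overrightarrow{\kappa}),l\rangle$ are consistent with what the paper leaves implicit.
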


\begin{pf}
If $\gamma$ is contractible then $I_{c_n u }(\gamma)=0$ (for all
$\displaystyle \overrightarrow{\kappa}$ in $U$). Which, by
Theorem~\ref{bary I}, means that
$<B_n(\overrightarrow{\kappa})-B_0(\overrightarrow{\kappa}),l> =0$
for all $\displaystyle \overrightarrow{\kappa} \in U$. Hence, as
both are rational functions of $\overrightarrow{\kappa}$, the
equality holds for all $\displaystyle \overrightarrow{\kappa}$ in
$C$.
 The second part is obtained similarly, by applying Theorem~\ref{bary I}
to $I_{c_{n-k}u^{k+1}}$.
\end{pf}

\begin{rmk}
An alternative way to state this corollary is that if $\gamma$ in
$Ham$ coming from $l \in \pi_1(T) \subset Lie(T)$ is contractible,
then $l$ is perpendicular to the affine span of
$\{B_k(\overrightarrow{\kappa})\}_{0 \leq k \leq n}$ for all
$\overrightarrow{\kappa}$ in $C$.

Also, since under the contractibility condition
$<B_n(\overrightarrow{\kappa}), l>$ is linear with integer
coefficients (\cite{poly with ml}), it follows that all
$<B_k(\overrightarrow{\kappa}), l>$ are linear with integer
coefficients. This could be applied to
$<B_0(\overrightarrow{\kappa}),l>$, which has coefficients in
$\frac{1}{Vol_0}\Z$, a-priori, to give lower bounds on the orders of
torsion elements of $\pi_1(Ham)$ represented by toric loops.
\end{rmk}

\begin{rmk}
In~\cite{vinewnew}, for $\C P^n$ bundles over $\C P^1$ and for the
blowup $Bl_1(\C P^n)$ of $\C P^n$ at one point, a similar result was
obtained by using the invariant $I_{c_1 u^n}$.
\end{rmk}

\subsection{A map of the rest of the article}

In section~\ref{proofs} the theorems are proven in the order of
their appearance. A stand-alone differential geometric proof of
Theorem~\ref{fano equality} is provided, as it is interestingly
similar to the proof of Theorem~\ref{equality of m and cw}.

In section~\ref{discussion} several related questions are posed.

\section{Proofs}\label{proofs}

\subsection{Proof of Theorem~\ref{equality of m and cw}}

Let $\{\widehat{\phi_t}\}_{t \in S^1}$ be a loop in $Q$, and
$\{\phi_t\}_{t \in S^1}$ be the corresponding "downstairs" loop in
$H$. Let $F_t$ be the normalized Hamiltonian function for
$\{\phi_t\}_{t \in S^1}$.

Note that $-h_t$ is a (non-normalized) Hamiltonian for
$\{\phi_t\}_{t \in S^1}$. (the minus sign follows from the fact that
the tautological line bundle has Chern class $-\alpha$, where
$\alpha$ is the generator of the cohomology of $\C P^n$)

As both $-h_t$ and $F_t$ are both hamiltonians for the same loop
in $H$, they differ by a constant dependent on time.

$$ h_t = F_t + c(t) $$

or, for future reference,

$$  -F_t = -h_t + c(t) $$

Then by integrating over M, we get:

$$ Vol(M,\omega^n) \cdot \int_0^1  c(t) dt = \int_0^1 dt \int_M h_t \omega^n $$

But the right hand side equals $cw(\{\widehat{\phi_t}\})$,
therefore

$$ \int_0^1  c(t) dt =  \frac{1}{Vol(M,\omega^n)} \cdot  cw(\{\widehat{\phi_t}\})$$

\bs

For a point $1 \in P_y$ for $y \in \C P^n$ consider the path
$\{\widehat{\phi_t} 1\}_{t \in S^1}$ in $P=S^{2n+1}$. It has a
unique, up to homotopy with fixed boundary, filling disk
$\widehat{D_0}$, since $\pi_1(S^{2n+1})=0$ and $\pi_2(S^{2n+1})=0$.
And to this disk there corresponds a canonical filling disk $D_0 :=
p \circ \widehat{D_0} $ of the "downstairs" path $\{\phi_t y\}_{t
\in S^1}$.

 From this point, the theorem follows from the following two
lemmas:

\begin{lma}\label{maslovs}
Consider a trajectory $\displaystyle m(\{ {\Phi_t(1)}\}_{t \in
S^1})$ for a point $1 \in P_y \subset SP_y \cong \C ^*$ in the
fiber over y.

Then, for the canonical filling disk $D_0$,

 $\displaystyle m(\{ {\Phi_t}_{*_1}\}_{t \in S^1}) = m_{D_0}(\{{\phi_t}_{*_p}\}) $

\end{lma}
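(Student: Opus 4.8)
The plan is to compare two Maslov indices that are each computed from a path of symplectic linear maps, and to show that these paths are actually obtainable from one another by a change of trivialization that does not affect the index. On the one hand, $m(\{\Phi_{t*_1}\})$ is the Maslov index of the linearized lift of $\widehat{\phi_t}$ at the point $1$ in the fiber, computed in the \emph{global linear} trivialization of $T(\C^{n+1}\setminus\{0\})\cong(\C^{n+1}\setminus\{0\})\times\C^{n+1}$. On the other hand, $m_{D_0}(\{\phi_{t*_p}\})$ is the Maslov index of the linearized Hamiltonian loop downstairs at $p=\pi(1)$, computed in the symplectic trivialization of $(TM,\omega)$ along the filling disk $D_0=p\circ\widehat{D_0}$. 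First I would set up the splitting of $T_q(\C^{n+1}\setminus\{0\})$ along the lifted path into the (real two-dimensional) distribution tangent to the $\C^*$-orbit direction (spanned by the Reeb/radial directions) and the $2n$-dimensional horizontal complement, which projects isomorphically and symplectically onto $T_{\phi_t(y)}M$ after the usual rescaling by the moment-map value. Because $\Phi_t$ is a homogeneous lift of a quantomorphism, it preserves this splitting, so the linearized flow is block-diagonal with respect to it.

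Next I would argue that the $\C^*$-direction block contributes zero to the Maslov index. Here the key point is that $\Phi_t$ lifts a \emph{loop} $\widehat{\phi_t}$ of quantomorphisms, so that along the $\C^*$-fiber direction the linearization is multiplication by a loop in $\C^*$ — and more precisely, since it is the linearization of a genuine lift and $\widehat{\phi_0}=\widehat{\phi_1}=\mathrm{Id}$ with the lift chosen as an honest loop of homogeneous diffeomorphisms, this $\C^*$-valued factor is contractible (it extends over the disk $\widehat{D_0}$, again using $\pi_1(S^{2n+1})=\pi_2(S^{2n+1})=0$ to see the loop in the base of the $\C^*$-bundle is nullhomotopic). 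A contractible loop of symplectic matrices in a two-dimensional symplectic subspace has Maslov index zero, so the whole index comes from the horizontal $2n$-dimensional block.

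Then I would identify the horizontal block with $\{\phi_{t*_p}\}$ in the trivialization coming from $\widehat{D_0}$. The trivialization of $(TM,\omega)$ along $D_0$ used in defining $m_{D_0}$ is, by construction, the one induced by pushing down a trivialization of the horizontal subbundle of $T(\C^{n+1}\setminus\{0\})$ along $\widehat{D_0}$; since $S^{2n+1}$ is simply connected and $\widehat{D_0}$ is a disk, the restriction of the ambient bundle to $\widehat{D_0}$ is trivial, and the global linear trivialization of $T(\C^{n+1}\setminus\{0\})$ restricted to the horizontal part agrees, up to homotopy rel boundary, with the trivialization obtained by filling — because any two trivializations of a symplectic bundle over a disk are homotopic rel nothing, and what matters for the Maslov index is only the homotopy class of the trivialization along the boundary loop, which is pinned down by the filling disk. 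Hence the horizontal block, read in the global linear trivialization, represents the same path of symplectic matrices up to a loop of change-of-frame matrices that bounds a disk, so its Maslov index equals $m_{D_0}(\{\phi_{t*_p}\})$. Combining the two blocks gives $m(\{\Phi_{t*_1}\})=0+m_{D_0}(\{\phi_{t*_p}\})$, which is the claim.

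I expect the main obstacle to be the bookkeeping in the third step: making precise the sense in which the canonical disk $\widehat{D_0}$ downstairs-and-upstairs produces \emph{the same} trivialization class of the horizontal bundle as the global linear one, including keeping track of the conformal rescaling by the moment map (which is a positive, hence contractible, factor and therefore Maslov-neutral, but must be handled) and checking that the splitting into Reeb and horizontal parts is itself symplectic so that the Maslov index is additive over the blocks. The Reeb-block vanishing in the second step is conceptually the crux but technically short once the loop condition is used correctly; I would be careful to note exactly where the fact that we started with a \emph{loop} (not merely a path) in $Q$ enters, since this is what distinguishes this lemma from the quasimorphism statements referenced in the remarks.
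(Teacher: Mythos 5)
Your proposal is correct and follows essentially the same route as the paper: split $T(\C^{n+1}\setminus\{0\})$ along the lifted loop into the $\C^*$/vertical direction and the horizontal complement (preserved by the homogeneous quantomorphism lift), observe that the vertical block is Maslov-trivial, and identify the horizontal block with $\{\phi_{t*}\}$ in the $D_0$-trivialization via $Hor\cong p^*TM$, using that all trivializations over the disk $\widehat{D_0}$ are homotopic so the global linear trivialization computes the same index. The only cosmetic difference is that you justify the vanishing of the vertical contribution by contractibility of the loop of $2\times 2$ blocks, whereas the paper notes it is the identity (parallel transport for the trivial connection) in the Euler/Reeb frame; both rest on the same homogeneity of the lift.
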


\begin{lma}\label{maslov I and cw }
For the canonical filling disk $D_0$ it is true that:

$\frac{1}{2(n+1)}m_{D_0}(\{{\phi_t}_{*_p}\})=  - I(\{{\phi_t}\}) +
\int_0^1 c(\tau) d \tau $

\end{lma}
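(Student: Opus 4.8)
The plan is to substitute the canonical disk $D=D_0$ directly into Definition~\ref{MAM} and thereby reduce the lemma to a single geometric identity about the prequantization. For $(\C P^n,\omega_{FS})$ with $[\omega_{FS}]$ the integral generator one has $c_1=(n+1)[\omega_{FS}]$, so the monotonicity constant is $\kappa=\frac{1}{n+1}$ and the Maslov term of Definition~\ref{MAM}, evaluated with $D=D_0$, equals $\kappa\cdot\frac{1}{2}m_{D_0}(\{{\phi_t}_{*_p}\})=\frac{1}{2(n+1)}m_{D_0}(\{{\phi_t}_{*_p}\})$. Writing out $I$ with the disk $D_0$ gives
$$ I(\{\phi_t\})=\int_{D_0}\omega-\int_0^1 F_t(\alpha_t)\,dt-\frac{1}{2(n+1)}m_{D_0}(\{{\phi_t}_{*_p}\}), $$
and, since $F_t=h_t-c(t)$ as functions on the base (the relation established just above), $\int_0^1 F_t(\alpha_t)\,dt=\int_0^1 h_t(\alpha_t)\,dt-\int_0^1 c(t)\,dt$. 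Combining these two lines, the assertion of the lemma becomes equivalent to the single identity
$$ \int_{D_0}\omega=\int_0^1 h_t(\alpha_t)\,dt . $$

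To establish this identity I would work upstairs in the prequantization circle bundle $P=S^{2n+1}$, whose contact form $\alpha=\frac{pdq-qdp}{2\pi}$ satisfies $d\alpha=\pi^*\omega$ for the integral normalization of $\omega=\omega_{FS}$, where $\pi\colon P\to M$ is the projection. Let $\widehat{X}_t$ be the vector field generating the quantomorphism loop $\{\widehat{\phi_t}\}$. By the definition of the contact Hamiltonian used in Definition~\ref{Calabi-Weinstein}, $\alpha(\widehat{X}_t)=h_t$, where $h_t$ here denotes the basic function $\pi^*h_t$ on $P$. Now consider the lifted trajectory $\widehat{\alpha}_t:=\widehat{\phi_t}(1)$ through the point $1\in P_p$ in the fiber over the chosen point $p$: it is a \emph{loop} in $P$, because $\widehat{\phi_0}=\widehat{\phi_1}=\mathrm{Id}$, it projects to $\alpha_t=\phi_t(p)$, and it is filled by $\widehat{D_0}$, with $D_0=\pi(\widehat{D_0})$. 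Its contact action is computed directly:
$$ \int_{\widehat{\alpha}}\alpha=\int_0^1 \alpha\!\left(\frac{d}{dt}\widehat{\alpha}_t\right)dt=\int_0^1 \alpha\!\left(\widehat{X}_t(\widehat{\alpha}_t)\right)dt=\int_0^1 h_t(\alpha_t)\,dt, $$
the last equality because $h_t$ is basic. On the other hand, Stokes' theorem applied to $\widehat{D_0}$ together with $d\alpha=\pi^*\omega$ gives
$$ \int_{\widehat{\alpha}}\alpha=\int_{\widehat{D_0}}d\alpha=\int_{\widehat{D_0}}\pi^*\omega=\int_{D_0}\omega . $$
Comparing these yields $\int_{D_0}\omega=\int_0^1 h_t(\alpha_t)\,dt$, and substituting back into the reduction above completes the proof.

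The main obstacle is not conceptual but bookkeeping: pinning down $d\alpha=\pi^*\omega$ with the normalization that makes $[\omega_{FS}]$ the integral generator (this is the source of the $\frac{1}{2\pi}$ flagged in the remarks following Theorem~\ref{equality of m and cw}), checking that sign conventions are consistent with the fact that $-h_t$ is a Hamiltonian for $\{\phi_t\}$ downstairs (the identity above uses only the upstairs relation $\alpha(\widehat{X}_t)=h_t$, so this is harmless here), and verifying the orientation compatibility $\pi_*[\widehat{D_0}]=[D_0]$ in the application of Stokes. None of these affects the structure of the argument, which is simply that the contact action of the lifted loop equals both $\int_0^1 h_t(\alpha_t)\,dt$ and the symplectic area of $D_0$, followed by elementary algebra against Definition~\ref{MAM}. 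As a sanity check, the Reeb case $h_t\equiv1$ of the Example after Theorem~\ref{equality of m and cw} gives $\frac{1}{2(n+1)}m_{D_0}=1$, $\int_0^1 c(t)\,dt=1$, and $I=0$, consistent with the stated formula.
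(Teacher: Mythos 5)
Your proposal is correct and follows essentially the same route as the paper's own proof: write out $I$ with the disk $D_0$, substitute the relation between $F_t$ and $h_t$, and reduce everything to the identity $\int_{D_0}\omega=\int_0^1 h_t(\alpha_t)\,dt$, which is proved by lifting to the prequantization and applying Stokes with $p^*\omega=d\alpha$ and $\partial\widehat{D_0}=\{\widehat{\phi}_t 1\}$. The only difference is presentational (you isolate the Stokes identity as a separate step and add the Reeb sanity check), not mathematical.
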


\bigskip

These lemmas yield,

$$ \frac{1}{2(n+1)} m(\{{\Phi_t}_{*_1}\}) = - I(\{{\phi_t}\}) +  \int_0^1 c(\tau) d \tau $$

But since,  $I \equiv 0$, by Seidel's argument, we'll have

$$ \frac{1}{2(n+1)} m(\{{\Phi_t}_{*_1}\}) = \int_0^1  c(t) dt = \frac{1}{Vol(M,\omega^n)} \cdot
cw(\{\widehat{\phi_t}\}),
$$

So that

$$\displaystyle \frac{1}{2(n+1)} \mu(\{\widehat{\phi_t}\}) =
\frac{1}{Vol(M,\omega^n)} \cdot cw(\{\widehat{\phi_t}\})$$

and we're done.

\bs \bs

We proceed to prove the lemmas:

\bs

 \emph{Proof of Lemma~\ref{maslovs}}

Since $\widehat{\phi_t}$ is a quantomorphism, it preserves the
vertical and the horizontal subbundles of $TP$.

Therefore $\Phi_t$ preserves the corresponding vertical and
horizontal subbundles of $TL^{\times}$.

So, $m(\{{\Phi_t}_{*_1}\}) = m_{\widehat{D_0}}(\{{\Phi_t}_{*_1}\}) =
m_{\widehat{D_0}}(\{{\Phi_t}_{*_1}|_{Hor}\}) +
m_{\widehat{D_0}}(\{{\Phi_t}_{*_1}|_{Vert}\})$

But $m_{\widehat{D_0}}(\{{\Phi_t}_{*_1}|_{Hor}\}) =
m_{D_0}(\{{\phi_t}_{*_y}\})$, since $Hor \cong p^* TM$ as
symplectic vector bundles;

And $m_{\widehat{D_0}}(\{{\Phi_t}_{*_1}|_{Vert}\})=0$, since
${\Phi_t}_* : Hor_1 \rightarrow Hor_{\Phi_t 1}$ is just equal to the
parallel translation map: $\Gamma_{\{\Phi_s 1\}_{s=0}^1}: Hor_1
\rightarrow Hor_{\Phi_t 1}$ since both preserve the connection
1-form, and the fiber is 1 dimensional.

 And since the connection on $\C ^{n+1} \setminus \{0\}$ is
trivial, $m_{\widehat{D_0}}(\{{\Phi_t}_{*_1}|_{Vert}\}) =
m({\{Id\}_{t \in S^1 }})$.

\bs

\emph{Proof of Lemma~\ref{maslov I and cw }}

By definition, $I(\{{\phi_t}\})= \int_{D_0} \omega - \int_0^1
F_t(\phi_t y)dt - {\frac{1}{n+1}} \cdot {\frac{1}{2}} \cdot
m_{D_0} (\{{\phi_t}_{*_y}\}) $

But $-\int_0^1 F_t(\phi_t y)dt = - \int_0^1 h_t(\widehat{\phi_t}
1) + \int_0^1 c(t) dt $

Substituting, and noting that: $\int_{D_0} \omega - \int_0^1
h_t(\widehat{\phi_t} 1) = 0$, by the Stokes formula (indeed,
$\int_{D_0} \omega - \int_0^1 h_t(\widehat{\phi_t} 1) =
\int_{\widehat{D_0}} p^* \omega - \int_{\{\widehat{\phi}_t 1\}_{t
\in S^1}} \alpha = \int_{\widehat{D_0}} d \alpha - \int_{\partial
\widehat{D_0}} \alpha$ since $p^*\omega = d \alpha$, and
$\{\widehat{\phi}_t 1\}_{t \in S^1 } =
\partial \widehat{D_0}$ ), we get

 $I(\{{\phi_t}\}) - \int_0^1 c(t) dt =  - {\frac{1}{2(n+1)}}
\cdot m_{D_0} (\{{\phi_t}_{*_y}\}) $

So $\frac{1}{2(n+1)} m_{D_0} (\{{\phi_t}_{*_y}\}) = -
I(\{{\phi_t}\}) + \int_0^1 c(\tau) d \tau $

\subsection{Proof of Corollary~\ref{m cw and pi_1}} \label{proof of cor to m cw}

{\bf Claim:} There exists a homomorphism $\alpha: \pi_1(H)
\rightarrow {\Z}/{{(n+1)}\Z}$, such that\\
\;\;1. $Ker (\alpha) = Ker(cw|_{\pi_1(Q)})$ and \\
\;\;2. on the element $\theta$ represented by the $S^1$ action of
the rotation of the first homogenous coordinate, it takes value $1
\in {\Z}/{{(n+1)}\Z}$

\bs

Note that property $2$ means that $\pi_1(H) \supset \; <\theta>
\cong {\Z}/{{(n+1)}\Z}$ and $\alpha|_{<\theta>}: <\theta>
\rightarrow {\Z}/{{(n+1)}\Z}$ is an isomorphism.

 The theorem now follows from the claim, as $\pi_1(H)$ is abelian.

\bs

{\em Proof of Claim:}

 Consider a loop $\gamma = \{{\phi_t}\}_{t
\in S^1}$ based at $Id$ in $H$, and lift it to a path
$\widehat{\gamma}= \{\widehat{\phi_t}\}$ in $Q$ (by the "canonical
lifting") using the mean-normalized Hamiltonian. Obviously, this
path represents an element of $Ker(cw)$. Note that if it closes up
to a loop, then it represents one in $Ker(cw|_{\pi_1(Q)})$.

Since it's a lift of a loop downstairs, we can close it up by a path
$\delta:= \{R_t\}_{0 \leq t \leq \alpha}$, $ \alpha \in {\R}/{\Z}$
of ("Reeb") rotations of the fibers, to get a loop
$\widehat{\gamma}*\delta$ in $Q$.

Note that $\gamma \mapsto \alpha$ gives a homomorphism $\pi_1(H)
\rightarrow {\R}/{\Z}$. We contend that this homomorphism actually
takes values in ${\frac{1}{(n+1)}\Z}/{\Z} \cong {\Z}/{{(n+1)}\Z}.$

Indeed, choose a representative $0 \leq \alpha < 1$ for $\alpha$.
Then, $\frac{1}{2(n+1)}m(\gamma)= \frac{1}{\mVol}
cw(\widehat{\gamma}) = \alpha$. So, $\alpha \in \frac{1}{n+1} \Z$
(since the Maslov index is always even).

Property $1$ now follows from this computation.

Property $2$ is a straightforward computation. One obtains that
the representative $0 \leq \alpha < 1$ equals $\frac{1}{n+1}$,
that is $1 \in {\Z}/{{(n+1)}\Z}$.

\bs

\subsection{Proof of Theorem~\ref{fano equality}}

First, note that it is enough to prove the equality on $S^1$
subgroups $\Phi_t$ of $K$, since $K$ is a compact Lie group.

The proof is based on the fact, that the Calabi-Yau theorem gives
$(M,\omega)$ a canonical prequantization.

The prequantization is built as follows: Let
$L=\bigwedge^nT^{(1,0)}M^*$ be the canonical line bundle on M.
 Let $\eta$ be the Calabi-Yau form with $Ric(\eta)=\omega$.

 It gives us a Hermitian metric on $T^{(1,0)}M$. This, induces, in turn a
Hermitian metric $\rho$ on $L$. To the pair $(L,\rho)$ there
corresponds a unique complex connection $\nabla_{Ch}$ which is
compatible with the metric and with the structure of a holomorphic
bundle (it is called the Chern connection of $(L,\rho)$). Denote
by $\bar{\alpha}$ the corresponding connection one-form alpha with
values in $\mathbb{C}$.

Now the prequantization $(P,\alpha)$ is the principal $S^1$-bundle
$P$ of $\rho$-unit vectors in $L$, together with the
\textit{$\mathbb{R}$-valued} connection one-form
 $\alpha=-i\bar{\alpha}|_P$.

As $\Phi_t$ are automorphisms of all the structures involved, they
lift canonically to automorphisms $\hat{\Phi_t}$ of $(P,\alpha)$.

In detail the lift acts as follows:

\begin{equation*}
      \begin{array}{clcr}
         P & \rightarrow & P \\
         (x,p) &\mapsto   & (\Phi_t(x),((\Phi_{t_{\ast_x}})^{-1})^{\ast}p) \\
         x \in M, p \in P_x &  &
      \end{array}
   \end{equation*}

To the loop of automorphisms $\hat{\Phi_t}$ there corresponds a
contact Hamiltonian
$\hat{h}=\alpha(\frac{d}{dt}|_{t=0}\hat{\Phi_t})$. This is an $S^1$
invariant function, and therefore can be considered as a function
$h$ on $M$.

\bigskip

\begin{df}
 {\it (divergence of $Z \in \mathfrak{h}(M)$ w. r. t. a Kahler form
$\eta$ )}
\\For a holomorphic vector field $Z \in \mathfrak{h}(M)$ define
$div_{\eta}(Z)$ to be the unique function $\psi \in
C^\infty(M,\C)$, such that $d(i_Z\eta^n)=\psi \eta^n$.
\end{df}

The proof is composed of a proposition from~\cite{fm}, and two
lemmas:

\begin{lma} \label{Futaki and div}
(A.Futaki, S.Morita~\cite{fm}) $F(Z)= \frac{1}{i} \int_{M}
div_\eta(Z)\,\omega ^n$ where $\eta$ is the Calabi-Yau form of
$\omega$ that is defined uniquely by the condition
$Ric(\eta)=\omega$.
\end{lma}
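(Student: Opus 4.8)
The goal is to re-express the Futaki invariant $F(Z)=\int_M Z(f_\omega)\,\omega^n$, where $Ric(\omega)-\omega=\partial\bar\partial f_\omega$, in terms of the divergence of $Z$ with respect to the Calabi--Yau form $\eta$ satisfying $Ric(\eta)=\omega$. The plan is to compute directly with $\eta$ in place of $\omega$: the whole point of Futaki's invariance theorem (quoted in Definition~\ref{Futaki basic}) is that $F(Z)$ does not depend on the representative Kähler form in the class $c_1(M)$, so I may replace $\omega$ by $\eta$ throughout the defining formula. With this choice, $Ric(\eta)-\eta=Ric(\eta)-\eta=\omega-\eta$, so the relevant potential is the function $f_\eta$ with $\partial\bar\partial f_\eta=\omega-\eta$, and $F(Z)=\int_M Z(f_\eta)\,\eta^n$.

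The key step is to identify $Z(f_\eta)$ with $\tfrac{1}{i}\,div_\eta(Z)$ up to the constant ambiguities, using the standard Kähler identity relating the Laplacian of the potential to the divergence of a holomorphic field. Concretely, I would recall that for a holomorphic vector field $Z$ and a Kähler form $\eta$, one has $d(i_Z\eta^n)=(div_\eta Z)\,\eta^n$ by definition, and that expanding $i_Z\eta^n=n\,(i_Z\eta)\wedge\eta^{n-1}$ together with $d(i_Z\eta)=L_Z\eta$ (Cartan, since $i_Z\,d\eta=0$) shows that $div_\eta(Z)\,\eta^n = n\,(L_Z\eta)\wedge\eta^{n-1}$, i.e. $L_Z\eta^n=(div_\eta Z)\eta^n$. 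On the other hand, holomorphicity of $Z$ forces $L_Z\eta$ to be of type $(1,1)$ and $\bar\partial$-exact along the imaginary directions; writing $Z$ as the $(1,0)$-part of a real vector field and using $Ric(\eta)=\omega$, the trace $div_\eta Z$ is (a constant multiple of) $\Delta_\eta$ applied to the Hamiltonian-type potential of $Z$, and pairing against $f_\eta$ by the symmetry of the complex Laplacian converts $\int_M Z(f_\eta)\eta^n$ into $\int_M (\text{potential of }Z)\cdot\Delta_\eta f_\eta\,\eta^n$, with $\Delta_\eta f_\eta$ governed by $\partial\bar\partial f_\eta=\omega-\eta$; tracking the factor of $i$ coming from the passage between the real connection form $\alpha=-i\bar\alpha|_P$ and the complex Ricci potential yields the stated $\tfrac{1}{i}$.

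In practice the cleanest route, and the one I would actually write up, is: (1) invoke Futaki's invariance to set $\omega\rightsquigarrow\eta$ in the definition of $F$; (2) observe $\partial\bar\partial f_\eta = \omega - \eta = Ric(\eta)-\eta$; (3) integrate $Z(f_\eta)$ by parts against $\eta^n$, using that $Z$ is holomorphic so $\int_M Z(g)\,\eta^n = -\int_M g\,(div_\eta Z)\,\eta^n$ for any $g$ (this is just $\int_M L_Z(g\,\eta^n)=0$ rewritten via $L_Z\eta^n=(div_\eta Z)\eta^n$); (4) choose $g$ cleverly — take $g$ with $\bar\partial g = $ the contraction data, or more simply apply the by-parts identity with the roles arranged so that the remaining integrand is exactly $div_\eta(Z)$ after using $Ric(\eta)=\omega$ and the $dd^c$-lemma normalization; (5) reconcile the constant, which is $\tfrac{1}{i}$ because $div_\eta$ is defined via the $\C$-valued form $i_Z\eta^n$ whereas $f_\omega\in C^\infty(M,i\R)$.

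The main obstacle is bookkeeping of constants and conventions rather than any deep difficulty: one must pin down the factor of $i$ and the sign coming from (a) the convention $Ric(\eta)=\omega$ versus $Ric(\omega)-\omega=\partial\bar\partial f_\omega$, (b) the identity $d(i_Z\eta^n)=\psi\,\eta^n$ defining $div_\eta$, and (c) the normalization $\int_M f_Z\,\omega^n=0$ that removes the constant ambiguity in $f_\omega$. Since this lemma is attributed to Futaki--Morita~\cite{fm}, I would present the argument in the compressed form above, flag the constant-chasing as the only subtle point, and refer to~\cite{fm} for the line-by-line verification.
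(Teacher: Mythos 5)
A point of comparison first: the paper does not prove this lemma at all --- it is invoked as a proposition of Futaki and Morita~\cite{fm} and used as a black box in the proof of Theorem~\ref{fano equality} --- so your closing deferral to~\cite{fm} is consistent with the paper's treatment. Moreover, your opening moves are sound: since $\eta$ is the Calabi--Yau form in $c_1(M)$ with $Ric(\eta)=\omega$, Futaki's independence theorem lets you compute $F$ with $\eta$, and your Cartan/Stokes argument correctly gives $L_Z\eta^n=(div_\eta Z)\,\eta^n$ and hence $F(Z)=\int_M Z(f_\eta)\,\eta^n=-\int_M f_\eta\, div_\eta(Z)\,\eta^n$, with $\partial\bar{\partial}f_\eta=\omega-\eta$.

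Judged as a proof, however, the decisive step is missing. The statement integrates $div_\eta(Z)$ against $\omega^n=Ric(\eta)^n$, not against $\eta^n$, and since $\int_M div_\eta(Z)\,\eta^n=\int_M d(i_Z\eta^n)=0$ by Stokes, the entire content of the lemma is the conversion of the volume form from $\eta^n$ to $\omega^n$ --- this is not ``bookkeeping of constants.'' Your step (4) (``choose $g$ cleverly \dots so that the remaining integrand is exactly $div_\eta(Z)$'') merely restates that goal. The missing ingredient is the identity, valid for holomorphic $Z$, expressing $\bar{\partial}(div_\eta Z)$ as a universal multiple of $i_Z Ric(\eta)=i_Z\omega$ (in local coordinates $div_\eta Z=\partial_jZ^j+Z^j\partial_j\log\det g$, whose $\bar{\partial}$ is controlled by the Ricci curvature); it is this identity, fed into $\omega^n=(\eta+\partial\bar{\partial}f_\eta)^n$ and iterated integration by parts, that trades the powers of $\eta$ for powers of $\omega$ and produces the factor $\frac{1}{i}$. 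Two further points: your alternative ``complex Laplacian'' route, even if made precise, pairs the potential of $Z$ against $\Lambda_\eta(\omega-\eta)$ and so yields integrands of the shape $\omega\wedge\eta^{n-1}$ rather than $\omega^n$, so it cannot terminate without the same identity; and your attribution of the $\frac{1}{i}$ to the prequantization form $\alpha=-i\bar{\alpha}|_P$ is off target --- the prequantization enters only in the other lemmas of the paper's proof of Theorem~\ref{fano equality}, while here the $i$ is purely a matter of the conventions $f_\omega\in C^\infty(M,i\R)$ and the complex-valued divergence.
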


\begin{lma} \label{div and h}
Let h be the contact Hamiltonian introduced earlier. Then $ih= -div
(Z)$.
\end{lma}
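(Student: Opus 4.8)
The plan is to compute both sides of the claimed identity $ih=-\mathrm{div}_\eta(Z)$ directly from the definitions, pushing everything down to $M$. The key point is that the contact Hamiltonian $\hat h=\alpha(\tfrac{d}{dt}|_{t=0}\hat\Phi_t)$ lives on the unit circle bundle $P\subset L$, and I want to identify the function $h$ on $M$ it descends to in terms of the Calabi--Yau form $\eta$ and the flow $\Phi_t$ generated by $Z$.

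First I would recall the explicit formula for the lift: $\hat\Phi_t(x,p)=(\Phi_t(x),((\Phi_{t_{*_x}})^{-1})^*p)$, and differentiate at $t=0$. Because $\alpha=-i\bar\alpha|_P$ is built from the Chern connection one-form $\bar\alpha$ of the Hermitian metric $\rho$ on $L=\bigwedge^n T^{(1,0)}M^*$, evaluating $\alpha$ on the generating vector field of $\hat\Phi_t$ at a unit vector $p\in L_x$ has two contributions: the horizontal motion (which $\alpha$ kills, being a connection form) and the fiberwise infinitesimal scaling coming from how $\Phi_{t_{*_x}}$ acts on the line $L_x=\bigwedge^n T^{(1,0)}_xM^*$. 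That scaling factor is precisely (minus) the trace of the $(1,0)$-derivative of $Z$, i.e. the infinitesimal complex volume distortion on $T^{(1,0)}M$; since $\alpha$ is the \emph{imaginary} part normalization $-i\bar\alpha$, one gets $h=\tfrac1i$ times this complex distortion function. Next I would match this to $\mathrm{div}_\eta(Z)$. By the definition of divergence, $d(i_Z\eta^n)=\mathrm{div}_\eta(Z)\,\eta^n$, and by Cartan's formula this equals $L_Z\eta^n=\mathrm{div}_\eta(Z)\,\eta^n$; since $Z$ is holomorphic, $L_Z\eta^n$ is a smooth multiple of $\eta^n$ whose coefficient is again the trace of the $(1,0)$-part of $\nabla Z$, computed this time against the volume form $\eta^n$. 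The Hermitian metric $\rho$ on $L$ induced by $\eta$ is exactly the one dual to $\eta^n$, so the two trace quantities agree up to sign, and the sign works out to give $ih=-\mathrm{div}_\eta(Z)$.

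Concretely the main steps are: (1) write the generator of $\hat\Phi_t$ at a point of $P$ and split $TP$ into horizontal and vertical parts; (2) evaluate $\alpha$ on it, reducing to the vertical component, which is the logarithmic derivative of the $\rho$-norm of $t\mapsto((\Phi_{t_{*_x}})^{-1})^*p$; (3) identify that logarithmic derivative with $-\mathrm{Re}$ or $-\mathrm{Im}$ (as dictated by the $-i$ in $\alpha=-i\bar\alpha|_P$) of the complex trace of $\nabla^{(1,0)}Z$ acting on $L_x$; (4) independently expand $L_Z\eta^n=\mathrm{div}_\eta(Z)\eta^n$ and recognize its coefficient as the same complex trace, using that $\eta^n$ trivializes $L^{-1}$ metrically; (5) compare, tracking the factor $i$ and the minus sign, to conclude $ih=-\mathrm{div}_\eta(Z)$.

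The hard part will be step (3)--(4): bookkeeping the identification of the fiberwise action of $\Phi_{t_{*_x}}$ on the canonical line $\bigwedge^n T^{(1,0)*}M$ with the divergence of $Z$ against $\eta^n$, and in particular nailing down the sign conventions — the $(-i)$ in the definition of the $\R$-valued connection form $\alpha$, the Chern-class sign $c_1(L)=-[\omega]$ already flagged earlier in the excerpt, and the orientation of the $S^1$-action versus the Reeb flow. These are exactly the places where an off-by-$i$ or sign error hides, so I would set up a local holomorphic frame and a local unitary frame for $L$ once and for all and do the two computations in parallel in that frame, so that the comparison in step (5) is term-by-term. Everything else (holomorphicity of the lifted field $\hat\Phi_t$, $S^1$-invariance of $\hat h$ so that it descends to $h$ on $M$, smoothness) is routine once the lift formula is in hand.
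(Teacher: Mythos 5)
Your proposal is correct in substance, but it is organized differently from the paper's argument, so a comparison is in order. The paper does not split the generator of the lifted flow by hand: its key tool is Goldstein's identity $\mathcal{L}_X s=\nabla_X s+\mathrm{div}(X)\cdot s$ for sections $s$ of the canonical bundle, with $\mathrm{div}(X)=\mathrm{trace}_{\C}(V\mapsto\nabla_V X)$. Evaluating this on a local unit-length parallel section and computing $\mathcal{L}_X s$ from the flow (writing $\Phi_t^*s=e^{ia(t)}s$ with $-a'(0)=h$) gives $\mathcal{L}_X s=-ih\,s$, hence $\mathrm{div}(X)=-ih$, and the passage to $\mathrm{div}_\eta(Z)$ for $Z=(X-iJX)/2$ is the cited $J$-linearity remark. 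Your route --- horizontal/vertical splitting of the generator of the canonical lift, identification of the vertical rate read off by $\alpha$ with the complex trace of the derivative of $Z$ acting on $L_x$, and an independent Cartan-formula computation $\mathcal{L}_Z\eta^n=\mathrm{div}_\eta(Z)\,\eta^n$ --- would in effect reprove Goldstein's formula and the $J$-linearity fact by a local-frame computation; it is more self-contained, at the price of exactly the sign-and-$i$ bookkeeping you flag, which the paper sidesteps by citation and by the flat-section trick. One phrase in your step (2) must be corrected when you carry this out: the $\rho$-norm of $t\mapsto\bigl((\Phi_{t\,*_x})^{-1}\bigr)^{*}p$ is identically $1$, since $\Phi_t$ preserves $\eta$ (by uniqueness of the Calabi--Yau form with $Ric(\eta)=\omega$) and hence $\rho$ and the Chern connection, so the canonical lift acts by quantomorphisms and the logarithmic derivative of the norm vanishes. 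What $\alpha$ extracts from the vertical component is the angular rate relative to parallel transport, i.e.\ the purely imaginary part of the complex scaling rate; this is consistent with $\mathrm{trace}_{\C}(\nabla X)=\mathrm{div}(X)=-ih$ being purely imaginary, because $\mathcal{L}_X\eta^n=0$. With that reading, your steps (3)--(5) go through and yield the lemma.
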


\begin{lma} \label{h and I}
For the same contact Hamiltonian h,
$$\int_{M} h \, \omega ^n = \mVol \cdot I(\gamma)
$$
Where $\gamma$ is the forementioned loop of automorphisms
$\{\Phi_t\}, t \in \mathbb{R} / \mathbb{Z}$.
\end{lma}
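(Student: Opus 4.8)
The plan is to reduce the claimed identity to the defining formula for $I$ by computing each of the three terms $\int_{D}\omega$, $\int_0^1 F_t(\alpha_t)\,dt$, and $\kappa\cdot\mathrm{Maslov}/2$ appearing in Definition~\ref{MAM}, using the specific prequantization $(P,\alpha)$ built from the Calabi--Yau metric and the canonical lift $\widehat\Phi_t$. First I would fix a point $p\in M$ and a unit vector $v\in P_p\subset L_p$, and consider the path $\{\widehat\Phi_t v\}_{t\in S^1}$ in $P$. Since $P=S^{2n+1}$-type total space has $\pi_1=\pi_2=0$ (more precisely, since the relevant obstruction classes vanish for the symplectization, $c_1=0$ on spheres as noted in Remark~\ref{formula on MICW}), this path bounds a disk $\widehat D$ upstairs, which projects to a filling disk $D:=\pi\circ\widehat D$ of $\alpha_t=\{\Phi_t(p)\}$ downstairs. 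This is exactly the ``canonical filling disk'' construction already used in the proof of Theorem~\ref{equality of m and cw}; I would invoke Lemma~\ref{maslovs} and the Stokes computation from the proof of Lemma~\ref{maslov I and cw } verbatim, since the prequantization here plays the same structural role.

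The key step is the Stokes identity $\int_{D}\omega=\int_0^1 \hat h(\widehat\Phi_t v)\,dt=\int_0^1 h(\Phi_t(p))\,dt$, which follows because $\pi^*\omega=d\alpha$ and the contact Hamiltonian is $\hat h=\alpha(\tfrac{d}{dt}\widehat\Phi_t)$, exactly as in the proof of Lemma~\ref{maslov I and cw }. Combined with the Maslov computation of Lemma~\ref{maslovs} — which here gives that the linearized flow splits into a horizontal part isomorphic to $\{\Phi_{t*_p}\}$ and a vertical part that is trivial because the Chern connection of $(L,\rho)$ is preserved and the fiber is one-dimensional — this yields
$$\tfrac{1}{2(n+1)}\, m_{D}(\{\Phi_{t*_p}\}) = -I(\gamma) + \int_0^1 c(t)\,dt,$$
where now, because $h$ is \emph{already} the (non-normalized) contact Hamiltonian and $F_t$ is the mean-normalized one, $c(t)$ is the constant by which they differ: $F_t = h\circ\Phi_t + c(t)$ after the appropriate sign bookkeeping (the canonical line bundle contributes a sign, as in the proof of Theorem~\ref{equality of m and cw}). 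Integrating $F_t=h+c(t)$ over $M$ against $\omega^n$ and using that $\int_M F_t\,\omega^n=0$ gives $\int_0^1 c(t)\,dt = -\tfrac{1}{\mVol}\int_M h\,\omega^n$, up to a sign I would track carefully.

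Finally I would use the vanishing $I\equiv 0$ is \emph{not} available here (we are on a general Fano manifold, not $\C P^n$), so instead I combine the displayed formula above with the Maslov--Stokes identity directly: since $m_D(\{\Phi_{t*_p}\})=\kappa\cdot 2\cdot(\text{Maslov}/2)$ enters $I$ with the opposite sign, the two occurrences of the Maslov term cancel and one is left precisely with $\int_M h\,\omega^n = \mVol\cdot I(\gamma)$ (or its negative), which is the claim. The main obstacle I anticipate is not conceptual but bookkeeping: getting every sign right — the sign from $L=\bigwedge^n T^{(1,0)}M^*$ versus its dual, the sign in the lift formula $(x,p)\mapsto(\Phi_t(x),((\Phi_{t*_x})^{-1})^*p)$, the sign convention making the Hopf flow have Maslov index $2$, and the sign in $\alpha=-i\bar\alpha|_P$ — must all be coherent for the Maslov terms to cancel rather than add. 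I would pin these down by testing the formula on the Reeb rotation (the $h\equiv\mathrm{const}$ loop), exactly as the Example after Theorem~\ref{equality of m and cw} does, to calibrate the overall sign before asserting the general identity.
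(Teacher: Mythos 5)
There is a genuine gap at the final step. Your route correctly reduces, via Stokes along the canonical lift, to an identity of the shape
\begin{equation*}
I(\gamma)\;=\;\frac{1}{\mVol}\int_M h\,\omega^n\;-\;\mathrm{Maslov}/2\bigl(\gamma_{*_p}^{D}\bigr),
\end{equation*}
where the Maslov term is computed in the trivialization coming from the canonical filling disk $D$ (equivalently, after the analogue of Lemma~\ref{maslovs}, it is the Maslov index of the linearized lift on the symplectization $K_M\setminus 0$). In the $\C P^n$ proof this residual Maslov term is exactly the nonlinear Maslov index $\mu$, and the argument closes only because $I\equiv 0$ there by Seidel's argument; on a general Fano manifold $I$ need not vanish (that is the content of Theorem~\ref{fano equality}), so to obtain the Lemma you must still prove that this residual Maslov term vanishes. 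Your claim that ``the two occurrences of the Maslov term cancel'' is a double count: the displayed identity $\tfrac{1}{2(n+1)}m_D=-I+\int c$ was obtained precisely by expanding $I$ by its definition and moving its Maslov term to the left-hand side, so there is no second occurrence left to cancel it against; cancelling again just returns the Stokes identity, not the Lemma. (There is also a smaller unaddressed point: for general Fano $M$ the total space $P\subset K_M$ need not have $\pi_1=\pi_2=0$, so the existence and well-definedness of the ``canonical'' filling disk is not verbatim and needs the $c_1=0$-on-spheres remark plus an argument that the lifted orbit bounds.)

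The paper avoids all of this by a different, more elementary computation that uses the hypothesis you never exploit: $\gamma$ is an $S^1$-subgroup, so $h=H+c$ with $H$ the autonomous normalized Hamiltonian and $c$ a genuine constant, and the flow has a fixed point $p$ (a critical point of $H$). Evaluating $I$ at this fixed point with the \emph{trivial} disk gives $I(\gamma)=-H(p)-\mathrm{Maslov}/2(\gamma_*)$, and the Maslov term is then identified concretely: the canonical lift rotates the fiber of $K_M$ over $p$ at constant speed $-h(p)$ via $\det(\gamma_*(t))$, so $-h(p)$ equals $\mathrm{Maslov}/2(\gamma_*)$, whence $I(\gamma)=h(p)-H(p)=c$ and $\int_M h\,\omega^n=\mVol\cdot c=\mVol\cdot I(\gamma)$. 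In other words, the determinant-rotation computation at a fixed point is exactly the ingredient that replaces the unavailable vanishing of $I$; supplying such a computation (e.g.\ showing the Maslov index of the canonical lift on $K_M\setminus 0$ vanishes) is what your proposal would need to become a proof.
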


\bs

 Indeed, given these lemmas one has:

$$F(Z)= \frac{1}{i} \int_{M}
div_\eta(Z)\,\omega ^n = \frac{1}{i} \int_{M} -ih\,\omega ^n = -
\mVol \cdot I(\gamma).$$

\bs

 We now go on to prove the lemmas:

\begin{proof}[Proof of Lemma~\ref{h and I}]

Let $h$ be the contact Hamiltonian introduced earlier. Denote by
$H$ the normalized Hamiltonian of the flow $\Phi_t$.

Then $h = H + c$ ,where $c \in \R$ is a constant.

Therefore

$$  \int_{M} h  \omega ^n = \mVol \cdot c.$$

{\bf Claim:} $c = I(\gamma)$.

The lemma follows from the claim by substituting into the last
formula. Indeed, we obtain $$\int_{M} h \, \omega ^n = \mVol \cdot
I(\gamma)$$.

{\em Proof of Claim:}

Indeed, $c = h(p) - H(p)$  for any $p \in M$.

As M is compact, one can choose p to be a critical point of H. This
point is a fixed point of the flow. Therefore computing I w.r.t.
this point and the trivial disk,
 $$I(\gamma)= - H(p) - Maslov/2(\gamma_*)$$
where $\gamma_*(t) = \Phi_{t_{*_x}}$ is the linearized loop of the
flow at this point.

Yet $-h(p)=$ (speed of rotation of $det(\gamma_*(t))$ in the fibre
over $p$). And this, as the speed is constant, in turn equals to
($\#$ of full turns of $det(\gamma_*(t)))$ which, by definition of
the Maslov index, is $Maslov/2(\gamma_*(t))$.

Therefore, $$I(\gamma)= - H(p) + h(p).$$

\end{proof}

\begin{proof}[Proof of Lemma~\ref{div and h}]

We use a formula proved in~\cite{goldstein} (Proposition 2.1.2, p.
237):
\bigskip\\
 Let $s$ be a section of the canonical line bundle $L$, and
$X$ - a (real) holomorphic vector field on $M$. Then,

$$ \mathcal{L}_X s = \nabla_X s + div(X)\cdot s$$
\\
where $div(X)$ is defined as follows:
\\
$$div(X)_p:=trace_{\C}(V \rightarrow \nabla_V X ).$$
\\
(note that $V \rightarrow \nabla_V X$ is a $J$-linear operator
$T_pM \rightarrow T_pM$.) \bigskip
 \\Locally, we can choose a flat section $s$ of unit
length. This is possible, as the connection preserves the
Hermitian metric.

For a flat section, the equation reduces to:

$$ \mathcal{L}_X s = div(X)\cdot s$$

On the other hand:
$$\displaystyle  \mathcal{L}_X s = \frac{d}{dt}|_{t=0}\Phi_t^* s $$

and as $\Phi_t^* s$ is also a flat section of unit length,

$$\Phi_t ^* s = e^{ia(t)} \cdot s$$  where $a(t)$ is a function of
the time only (because of the flatness condition). Moreover $-a'(0)=
h$ is the contact Hamiltonian.

Therefore,

$$\displaystyle  \mathcal{L}_X s = \frac{d}{dt}|_{t=0}e^{ia(t)} \cdot s =$$
$$\bigskip = ia'(0) \cdot s = - i \, h \cdot s$$

Comparing the two computations of $\mathcal{L}_X s$, we obtain
$$ div(X)= -i \, h$$

The last remark is that for $Z:=(X-iJX)/2 \in \mathfrak{h}(M)$
$$div_\eta(Z)=div(X).$$
(by the same J-linearity; see e.g.~\cite{kob nom})
\end{proof}


\subsection{Proof of Theorem~\ref{generalized equality}}

The key lemma of the proof is the following simple fact:

\begin{lma} \label{basic}
Let $(M,\omega,J)$ be a compact Kahler manifold. Let $f\in
C^{\infty} (M,\R)$ be a real valued smooth function on $M$. Then

$$i_X\omega = -df \  \Leftrightarrow \ i_Z\omega = -
\bar{\partial}f$$ where $Z=(X-iJX)/2$.

\end{lma}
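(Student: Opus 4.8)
The plan is to split the $1$-forms on both sides of each equation into their $(1,0)$- and $(0,1)$-type components and to exploit the reality of $\omega$ and of $f$. Since $X$ is a \emph{real} vector field, its $(1,0)$-part is precisely $Z=(X-iJX)/2$ and its $(0,1)$-part is $\bar{Z}=(X+iJX)/2$, so that $X=Z+\bar{Z}$. Because $(M,\omega,J)$ is Kähler, $\omega$ is a real form of type $(1,1)$; consequently $i_Z\omega$ is of type $(0,1)$ and $i_{\bar{Z}}\omega$ is of type $(1,0)$, and from $\overline{\omega}=\omega$ one gets $i_{\bar{Z}}\omega=\overline{i_Z\omega}$. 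Likewise, for the real function $f$ one has $df=\partial f+\bar{\partial}f$ with $\overline{\bar{\partial}f}=\partial\bar{f}=\partial f$.

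First I would prove the implication $\Rightarrow$. Assuming $i_X\omega=-df$, expand $i_X\omega=i_Z\omega+i_{\bar{Z}}\omega$ and extract the $(0,1)$-component of both sides: on the left it is $i_Z\omega$ (the summand $i_{\bar{Z}}\omega$ being of type $(1,0)$), and on the right it is $-\bar{\partial}f$. Comparing gives $i_Z\omega=-\bar{\partial}f$. Conversely, assuming $i_Z\omega=-\bar{\partial}f$, take complex conjugates; using $\overline{\omega}=\omega$ and $\overline{f}=f$ this yields $i_{\bar{Z}}\omega=-\partial f$. Adding the two identities and using $X=Z+\bar{Z}$ together with $df=\partial f+\bar{\partial}f$ gives $i_X\omega=-df$.

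There is essentially no serious obstacle here; the only points that require care are the bookkeeping of the factor $\tfrac{1}{2}$ (so that $Z$ is genuinely the $(1,0)$-part of $X$, not a rescaling of it) and the sign/conjugation conventions. It is also worth remarking that the only structure actually used is that $\omega$ is a real $(1,1)$-form, which is guaranteed by the Kähler hypothesis, and that the statement is purely algebraic along the fibres — no holomorphicity of $Z$ or closedness of $\omega$ is needed for the equivalence itself.
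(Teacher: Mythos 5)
Your argument is correct and complete: decomposing $X=Z+\bar{Z}$ and $df=\partial f+\bar{\partial}f$ by type, observing that $i_Z\omega$ is of type $(0,1)$ because $\omega$ is a real $(1,1)$-form, and recovering the $(1,0)$-component by conjugation using the reality of $\omega$ and $f$ is exactly the intended reasoning. The paper itself states this lemma as a ``simple fact'' without proof, so your write-up supplies the standard argument it implicitly relies on, including the correct normalization making $Z$ the genuine $(1,0)$-part of $X$.
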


As in the proof of Theorem~\ref{fano equality}, it's enough to prove
the given equality on $S^1$ subgroups of $K$.

Given such an $S^1$ subgroup $\gamma$ of $K$ one has the
corresponding Hamiltonian fibration $P$ over $\C P^1$. According
to~\cite{lmp} (Remark 3.C), this bundle is just the restriction to
$\C P^1$ of the universal bundle $M_{S^1}:=M\times_{S^1}S^{\infty}
\to \C P^{\infty}$. And so, the cohomology $H^*(P,\R)$ is just
$H_{S^1}(M,\R)\otimes_{\R[z]}{\R[z]/z^2\R[z]}$.

Also, the vertical Chern classes of $P$ are nothing but the
restrictions equivariant Chern classes of the tangent bundle.

Denote by $f$ the normalized Hamiltonian function of the $S^1$
action, and by $X$ the corresponding vector field.

Consider now the Cartan model for equivariant cohomology:

The coupling class is represented by $u=\omega + z f$.

Given an $S^1$-equivariant connection form $\theta$ on the frame
bundle of the complex vector bundle $TM$ and its curvature $\Theta$,
the equivariant Chern classes are represented by $c_r(z
\theta(\widehat{X})+\Theta)$, where $c_r$ is the elementary
symmetric polynomial of degree $r$, and $\widehat{X}$ is the lift of
$X$ to the frame bundle (proven e.g. in~\cite{bt}).

 Therefore the value of the generalized mixed action-Maslov
invariant $I_{\alpha}$ on $\gamma$ is given by
$$I_{\alpha}(\gamma)=\int_M (c_1)^{i_1} \cdot ... \cdot (c_n)^{i_n}
(\theta(\widehat{X})+\Theta)) (\omega + f)^j.$$

By invoking Lemma~\ref{basic} and choosing the connection to be of
type $(1,0)$ this now equals to $F_{\alpha}(Z)$ with $Z =
(X-iJX)/2$.

\subsection{Proof of Theorem~\ref{bary I}}

Take a toric $S^1$ action $\gamma$ with mean normalized Hamiltonian
$f$.

Using the language of Hamiltonian fibrations, one has, by definition
$$I_{c_L u^{n+1-L}}(\gamma)= \int_{P_{\gamma}} c_L u^{n+1-L} =
<PD_P(c_L), u^{n+1-L}>$$

Denote by $\Delta_{n-L}$ the formal sum of the faces of $\Delta$ of
$dim= n-L$. And let $N=m^{-1}(\Delta_{n-L})$ be the formal sum of
the preimages of these faces by the moment map (note that these
preimages are all T-invariant).

Let $F_{n-L}$ be the chain obtained from $N$ in the same way as
$P_{\gamma}$ is obtained from $M$. Then $PD_P(c_L)$ is represented
by $F_{n-L}$. This follows from~\cite{kj}, the fact that the first
Chern class of a holomorphic line bundle is the Poincare dual of the
corresponding divisor, and choosing an equivariant meromorphic
section for each of the relevant holomorphic line bundles.

According to~\cite{polterovich loops}, the coupling class is
represented by the form:

\bs

\centerline{\{$\omega$ on $M \times D_{+}$; $\omega + d (\psi(r)
f(x) dt))$ on $M \times D_{-}$ \}}

\bs

 where $D_{+}$ and $D_{-}$ are two
disks (from which the sphere is glued),
 $r$ and $t$ are the
radial and the angular coordinates on the disk $D_{-}$, $x$ denotes
a point on $M$, and $\psi$ is a function that vanishes near $0$ and
equals to $1$ near $1$.

\bs

So that $u^{n+1-L}$ restricted to $F_{n-L}$, equals to

\bs

\centerline{\{0 on $N \times D_{+}$; $(n+1-L){\omega}^{n-L}\psi(r)'
f(x) dr dt$ on $N \times D_{-}$ \}.}

\bs

Hence $\int_{F_{n-L}} u^{n+1-L}$ = $(n+1-L)\int_{N}\,f\,
\omega^{(n-L)}$, so that:

$$<PD_P(c_L), u^{n+1-L}>= (n+1-L)\int_{N}\,f\,
\omega^{(n-L)}$$

Yet, the right hand side is just equal to
$-(n+1-L)!Vol_{(n-L)}(\Delta)(B_{n-L} - B_n)$ evalutated on the
point $l$ in the integer lattice corresponding to $\gamma$
 (the "-" sign
stems from the difference in sign conventions for hamiltonians and
moment maps).

And therefore,

$$I_{c_L u^{n+1-L}} / Vol_{(n-L)}(\Delta)= -(n+1-L)!(B_{(n-L)} -
B_n)$$

\bs

as required.

\section{Discussion and Questions}\label{discussion}

The following are questions related to the subject of this note.

\begin{enumerate}

\item

 It would be interesting to make further comparisons of the
computations in subsection~\ref{toric computations} to the results
of~\cite{poly with ml}. In particular, Theorem~\ref{mass linear}
raises the following inverse question:

\begin{qtn}

Assume that a loop $\gamma$ in $Ham$ coming from the toric action
is such that all $I_{\alpha}(\gamma)=0$ for all
$\overrightarrow{\kappa}$ in the chamber. Does it follow that
$\gamma$ is contractible?

\end{qtn}

\item

 It was shown in~\cite{py qm} that for monotone symplectic
manifolds the mixed Action-Maslov invariant extends to a homogenous
quasimorphism on the universal cover of the Hamiltonian group. Then
it was asked by L. Polterovich (\cite{poterovich}) whether one could
extend $\displaystyle I_{c_1 \, u^n}$ to such a quasimorphism in the
non-monotone case. As a first step, it would be interesting to check
this for Kahler manifolds of {\em constant scalar curvature}.

\item

 It would be interesting to investigate the collinearity phenomenon
 of barycenters of toric Fano polytopes, beyond the triple $B_0, B_n, B_{n-1}$.
 Are all the barycenters collinear? Are they collinear in triples $B_L, B_n, B_{n-L-1}$? Is this related to a certain duality?
 The first nontrivial case for investigation is in complex dimension 4.
 For example, taking the Ostrover-Tyomkin polytope (\cite{yaron tyomkin}, section 5), it
would be interesting to check whether $B_1,B_2,B_4$ are collinear.
In general, following Remarks~\ref{phenomenon} and~\ref{phenomenon
toric}, further vanishings of Futaki invariants (\cite{f book})
might be of use in addressing these questions.
\item

For toric Fano  manifolds $(M,\omega,J)$, whenever $I=0$ on toric
loops, the Futaki invariant $F$ vanishes. This is equivalent,
by~\cite{wang zhu}, to the existence of a Kahler-Einstein metric.
However not all Hamiltonian loops are necessarily toric, so it's
interesting to answer

\begin{qtn}
Do Kahler-Einstein toric Fano manifolds $(M,\omega,J)$ have $I=0$
identically?
\end{qtn}

Here an extension of Seidel's argument from \cite{calabi qm},
section 4.3 could be of essence.

It may also be interesting to investigate the same question for
general Kahler-Einstein manifolds.
\end{enumerate}

\section*{Acknowledgements}

I would like to thank my advisor Leonid Polterovich for his
comprehensive guidance and apt help, and for giving my proofs a
refinedly elegant touch. I would also like to thank Michael Entov
and Leonid Polterovich for introducing me to the question of
Theorem~\ref{fano equality}, which was suggested by Chris Woodward,
and for their help with it. Many thanks to Gabi Ben-Simon for
introducing me to the conjecture of~\cite{givental}, to Frol
Zapolsky for reintroducing it to me, and to them both for
stimulating conversations. I would like to thank Joseph Bernstein
for teaching me a lot, about Differential Geometry in particular,
and for useful comments. I would like to thank Dusa McDuff for
useful comments on an earlier version of this paper. I thank the
organizers of the UK-Japan Winter School 2008 for enabling me to
present a preliminary version of Theorem~\ref{fano equality}, and
the organizers of the Conference on Moment Maps 2008 for a fun,
stimulating atmosphere. I would also like to thank my family and
friends for their support.


\begin{thebibliography}{refer}


\bibitem[1]{gabi} \textsc{Ben Simon G.}
{\em The nonlinear Maslov index and the Calabi homomorphism.}
Commun. Contemp. Math. 9 (2007), no. 6, 769--780.

\bibitem[2]{bt}  \textsc{Bott, R., Tu, L. W.}
{\em Equivariant characteristic classes in the Cartan model.}
Geometry, analysis and applications (Varanasi, 2000), 3--20, World
Sci. Publ., River Edge, NJ, 2001.


\bibitem[3]{calabi}\textsc{Calabi, E.}
{\em On the group of automorphisms of a symplectic manifold.}
Problems in analysis (Lectures at the Sympos. in honor of Salomon
Bochner, Princeton Univ., Princeton, N.J., 1969), pp. 1--26.
Princeton Univ. Press, Princeton, N.J., 1970.



\bibitem[4]{donaldson} \textsc{Donaldson, S. K.}
{\em Scalar curvature and stability of toric varieties.}  J.
Differential Geom.  62  (2002),  no. 2, 289--349.


\bibitem[5]{entov quasimorphism} \textsc{Entov, M.} {\em Commutator length of symplectomorphisms.}  Comment. Math. Helv.  79  (2004),  no. 1, 58--104.

\bibitem[6]{rigid}\textsc{Entov, M. and Polterovich, L.}
{\em Rigid subsets of symplectic manifolds.} (preprint)

\bibitem[7]{calabi qm} \textsc{Entov, M. and Polterovich, L.}
{\em Calabi quasimorphism and quantum homology.} Int. Math. Res.
Not. 2003, no. 30, 1635--1676.




  \bibitem[8]{f} \textsc{Futaki, A.} {\em An obstruction to the existence of Einstein K\"{a}hler
metrics.} Invent. Math.  73  (1983), no. 3, 437--443.

\bibitem[9]{f2004}\textsc{Futaki, A.}
{\em Asymptotic Chow semi-stability and integral invariants.}
Internat. J. Math. 15 (2004), no. 9, 967--979


\bibitem[10]{f book}\textsc{Futaki, A.}
{\em K$\ddot{a}$hler-Einstein metrics and integral invariants.}
Lecture Notes in Mathematics, 1314. Springer-Verlag, Berlin, 1988.
iv+140 pp. ISBN: 3-540-19250-6

\bibitem[11]{fm2}\textsc{Futaki, A. and Morita, S.}
{\em Invariant polynomials on compact complex manifolds.} Proc.
Japan Acad. Ser. A Math. Sci. 60 (1984), no. 10, 369--372.


 \bibitem[12]{fm} \textsc{Futaki, A. and Morita, S.},
 {\em Invariant polynomials of the
automorphism group of a compact complex manifold.} J. Differential
Geom.
   \textbf{21} (1985),  no. 1, 135--142.


 \bibitem[13]{givental} \textsc{Givental, A.},  {\em Nonlinear Generalization of the Maslov Index.}  Theory of singularities and its applications, 71--103, Adv. Soviet Math., 1, Amer. Math. Soc., Providence, RI, 1990.

 \bibitem[14]{goldstein} \textsc{Goldstein, E.}, {\em A construction of new families of minimal Lagrangian submanifolds via torus actions.}
 J. Differential Geom. \textbf{58} (2001),  no. 2, 233--261.





\bibitem[15]{kj}  \textsc{Januszkiewicz, T. and Kedra, J.}
{\em Characteristic classes of smooth fibrations.} preprint.


  \bibitem[16]{kob nom}\textsc{ Kobayashi, S. and Nomizu, K.} {\em Foundations of Differential Geometry Vol. I.}
 Interscience Publishers, a division of John Wiley \& Sons, New York-Lond on 1963 xi+329 pp.



\bibitem[17]{lmp} \textsc{Lalonde, F., McDuff, D., Polterovich, L.,}
{\em Topological rigidity of Hamiltonian loops and quantum
homology.} Invent. Math. \textbf{135} (1999), no. 2, 369--385.

\bibitem[18]{lebrun simanca} \textsc{LeBrun, C., Simanca, S. R.} {\em Extremal Kahler metrics and complex deformation theory.}  Geom. Funct. Anal.  4  (1994),  no. 3, 298--336.


  \bibitem[19]{symplectures} \textsc{Mcduff, D.}, {\em Lectures on groups of symplectomorphisms.} Rend. Circ.
Mat. Palermo (2) Suppl. No. 72 (2004), 43--78.

\bibitem[20]{monodromy in ham} \textsc{Mcduff, D.}, {\em Monodromy in Hamiltonian Floer
theory} preprint



\bibitem[21]{mabuchi} \textsc{Mabuchi, T.}
{\em Einstein-Kahler forms, Futaki invariants and convex geometry
on toric Fano varieties.}  Osaka J. Math.  24  (1987),  no. 4,
705--737.

 \bibitem[22]{poly with ml} \textsc{Mcduff, D. and Tolman, S.},
 {\em Polytopes with mass linear functions, part I}
preprint

\bibitem[23]{pi_1 injects} \textsc{Mcduff, D. and Tolman, S.},
{\em On nearly semifree circle actions} preprint


\bibitem[24]{nakagawa1} \textsc{Nakagawa, Y.} {\em Combinatorial formulae for Futaki characters and generalized Killing forms of toric Fano orbifolds.} The Third Pacific Rim Geometry Conference (Seoul, 1996), 223--260, Monogr. Geom. Topology, 25, Int. Press, Cambridge, MA, 1998.

\bibitem[25]{nakagawa2} \textsc{Nakagawa, Y.} {\em Bando-Calabi-Futaki character of compact toric manifolds.} Tohoku Math. J. (2) 53 (2001), no. 4, 479--490.

\bibitem[26]{yaron tyomkin} \textsc{Ostrover, Y. and Tyomkin, I.} {\em On the quantum homology
algebra of toric Fano manifolds} preprint



  \bibitem[27]{polterovich loops} \textsc{Polterovich, L.}, {\em Hamiltonian loops and Arnold's principle.}  Topics in singularity theory,  181--187, Amer. Math. Soc. Transl. Ser. 2, 180, Amer. Math. Soc., Providence, RI, 1997.

\bibitem[28]{poterovich} \textsc{Polterovich, L.}, {\em private
communication.}

  \bibitem[29]{py qm} \textsc{Py, P.,} {\em Quasi-morphismes et invariant de Calabi.}
Ann. Sci. E'cole Norm. Sup. (4)  \textbf{39}  (2006),  no. 1,
177--195.


\bibitem[30]{reznikov} \textsc{Reznikov, A.} {\em Characteristic classes in symplectic topology. Appendix D by Ludmil Katzarkov.} Selecta Math. (N.S.) 3 (1997), no. 4, 601--642.

\bibitem[31]{seidel} \textsc{Seidel, P.}, {\em $\pi_1$ of symplectic automorphism groups and invertibles in quantum cohomology
rings.} Geom. and Funct. Anal. 7 (1997), 1046 -1095.






\bibitem[32]{vinold} \textsc{Vina, A.}
{\em A characteristic number of Hamiltonian bundles over $S\sp
2$.} J. Geom. Phys. 56 (2006), no. 11, 2327--2343.

\bibitem[33]{vinew} \textsc{Vina, A.}
{\em Hamiltonian diffeomorphisms of toric manifolds and flag
manifolds.} J. Geom. Phys. 57 (2007), no. 3, 943--965.

\bibitem[34]{vinewnew} \textsc{Vina, A.}
{\em A characteristic number of bundles determined by mass linear}
pairs preprint.


\bibitem[35]{weinstein} \textsc{Weinstein, A.} {\em Cohomology of symplectomorphism groups and
critical values of Hamiltonians.} Math. Z. 201 (1989), no. 1,
75--82.

\bibitem[36]{woodward} \textsc{Woodward, C.} {\em private communication.}

\bibitem[37]{wang zhu} \textsc{Wang, X.-J.; Zhu, X.}
{\em Kahler-Ricci solitons on toric manifolds with positive first
Chern class.}  Adv. Math. 188 (2004), no. 1, 87--103.




\end{thebibliography}
\end{document}